\definecolor{darkgreen}{rgb}{0,0.5,0}
\newtheorem{theorem}{Theorem}[section]
\newtheorem{lemma}[theorem]{Lemma}
\newtheorem{question}[theorem]{Question}
\newtheorem{corollary}[theorem]{Corollary}
\theoremstyle{definition}
\newtheorem{definition}[theorem]{Definition}
\newtheorem{remark}[theorem]{Remark}
\newcommand{\im}{\textrm{im}\hspace{0.05cm}}
\def\spinc {{\operatorname{spin^c}}}
\def\Spinc {{\operatorname{Spin^c}}}
\def\s{\mathfrak s}
\def\CF {\mathit{CF}}
\def\HF {\mathit{HF}}
\newcommand \CFm {\CF^-}
\newcommand \HFm {\HF^-}
\newcommand{\Z}{\mathbb{Z}}
\newcommand{\Q}{\mathbb{Q}}
\newcommand{\F}{\mathbb{F}}
\newcommand{\Arf}{\mathrm{Arf}}
\let\int\relax
\newcommand{\int}{\mathring}
\DeclareMathSymbol{\wtilde}{\mathord}{largesymbols}{"65}
\mathchardef\mhyphen="2D
\newcommand{\CFK}{\mathcal{CFK}}
\newcommand{\gr}{\text{gr}}
\newcommand{\id}{\mathrm{id}}
\newcommand{\Cconn}{C_{\mathrm{conn}}}
\newcommand{\scU}{\mathscr{U}}
\newcommand{\scV}{\mathscr{V}}
\newcommand{\cC}{\mathcal{C}}
\newcommand{\cF}{\mathcal{F}}
\newcommand{\cU}{\mathscr{U}}
\newcommand{\cV}{\mathscr{V}}
\newcommand{\Cyl}{\mathrm{Cyl}_{\phi, \iota_K}}
\renewcommand{\d}{\partial}
\newcommand{\bF}{\mathbb{F}}
\newcommand{\tl}{t_{\lambda}}
\newcommand{\tm}{t_{\mu}}
\newcommand{\Ds}{\mathcal{S}}
\title{Gompf's cork and Heegaard Floer homology}
\author{Irving Dai}
\address{Department of Mathematics\\The University of Texas at Austin\\ Austin, TX, USA}
\email{irving.dai@math.utexas.edu}
\author{Abhishek Mallick}
\address{Department of Mathematics\\Rutgers University - New Brunswick\\ NJ, USA}
\email{abhishek.mallick@rutgers.edu}
\author{Ian Zemke}
\address{Department of Mathematics\\Princeton University\\  Princeton, NJ, USA}
\email{izemke@math.princeton.edu}
\begin{document}
%\vspace*{-1cm}
\maketitle
%\vspace*{-0.4cm}
\begin{abstract}
Gompf showed that for $K$ in a certain family of double-twist knots, the swallow-follow operation makes $1/n$-surgery on $K \# -K$ into a cork boundary. We derive a general Floer-theoretic condition on $K$ under which this is the case. Our formalism allows us to produce many further examples of corks, partially answering a question of Gompf. Unlike Gompf's method, our proof does not rely on any closed 4-manifold invariants or effective embeddings, and also generalizes to other diffeomorphisms.
\end{abstract}

\maketitle

\section{Introduction}\label{sec:1}
The study of exotic phenomena has traditionally occupied a central role in the development of low-dimensional topology. Following the work of Akbulut \cite{Akbulut}, it has emerged that this has a close connection to the theory of corks. Recall that a \textit{cork} is a compact, contractible $4$-manifold $C$ equipped with a boundary diffeomorphism $f \colon \partial C \rightarrow \partial C$ which does not extend over $C$ as a diffeomorphism. In contrast, such an $f$ always extends over $C$ as a homeomorphism by work of Freedman \cite{Freedman}. It is now known that any two smooth structures on the same simply-connected closed 4-manifold are related by cutting out some $C$ and re-gluing via $f$, an operation called a \textit{cork twist} \cite{Matveyev, CFHS}.

In \cite{gompf2017infinite}, Gompf gave a simple new construction leading to the first instance of an \textit{infinite-order cork}, or \textit{$\Z$-cork}. This is a compact, contractible $4$-manifold $C$ equipped with a boundary diffeomorphism $f \colon \partial C \rightarrow \partial C$ such that no power of $f$ extends over $C$ as a diffeomorphism. Gompf's cork is constructed by considering the $3$-manifold
\[
Y_{K, m} = S^3_{1/m}(K \# -K)
\]
for $K$ a knot in $S^3$ and $m \in \Z^{\neq 0}$. It is not hard to see that $Y_{K, m}$ bounds the contractible manifold $C_{K, m}$ obtained by extending $1/m$-surgery on $K \# -K$ over its standard ribbon disk in $B^4$. Note that $K \# - K$ admits a self-isotopy defined by pushing the summand $K$ along $K \# -K$ once around in a full loop. This is referred to as the \textit{swallow-follow operation} $\tl$; we denote the induced self-diffeomorphism on the surgered manifold $Y_{K, m}$ also by $\tl$. See Section~\ref{sec:2.1} for further discussion.

Gompf showed that for a specific family of double-twist knots $K$ beginning with $K = 4_1$, each $C_{K, m}$ may be embedded in a blown-up elliptic surface $X$ such that twists by powers of $\tl$ correspond to different Fintushel-Stern knot surgeries, and in fact give an infinite collection of pairwise distinct smooth structures on $X$. This proves that these $(C_{K, m}, \tl)$ are $\Z$-corks. It is natural to ask which other choices of $K$ make $(C_{K, m}, \tl)$ into a $\Z$-cork, or even just a cork. This question was posed in \cite{gompf2017infinite}:

\begin{question}[{\cite[Question 1.6]{gompf2017infinite}}]\label{q:1.1}
Let $m \in \Z^{\neq 0}$ and $\tl$ be induced from the swallow-follow operation on $K \# -K$. For which knots $K$ is $(C_{K, m}, \tl)$ is a $\Z$-cork?
\end{question}

In this paper, we investigate the question of when $(C_{K, m}, \tl)$ is a cork, although Gompf originally posed Question~\ref{q:1.1} in the setting of $\Z$-corks. (We expect that the methods of this paper can be generalized to establish infinite-order corks; see Remark~\ref{rem:1.4} below.) As far as the authors are aware, the knots considered in \cite{gompf2017infinite} are the only affirmative examples of such $(C_{K, m}, \tl)$ appearing in the literature, even in the weaker setting where $(C_{K, m}, \tl)$ is only required to be a (regular) cork. It was shown by Ray-Ruberman that if $K$ is a torus knot, then $(C_{K, m}, \tl)$ is \textit{not} a cork for any $m \in \Z^{\neq 0}$ \cite{RayRuberman}. The question of which $K$ satisfy Question~\ref{q:1.1} thus certainly appears to be subtle. 

Gompf's original proof relies on finding an embedding of $C_{K, m}$ in a closed $4$-manifold $X$ and identifying powers of the cork twist with different Fintushel-Stern knot surgeries on $X$. The authors are not aware of any systematic method for establishing a similar construction for other families of knots. In this paper, we instead provide a flexible criterion on the knot Floer homology of $K$ which guarantees that $(C_{K, m}, \tl)$ is a cork. The perspective we take is also slightly different than the one in Question~\ref{q:1.1}: instead of corks, we focus on the notion of a \emph{strong cork}, due to Lin-Ruberman-Saveliev \cite{LRS}. Recall that a strong cork is a pair $(Y, f)$ where $Y$ is a 3-manifold and $f$ is a diffeomorphism of $Y$ which does not extend over \emph{any} homology ball that $Y$ bounds.\footnote{We generally require $Y$ to bound at least one contractible manifold, so that a strong cork is a cork.} 

In the present work, we show that $(Y_{K, m}, \tl)$ constitutes a strong cork for a large family of knots $K$, including many of the double-twist knots from \cite{gompf2017infinite}. In the context of Question~\ref{q:1.1}, this means that the role of the specific manifold $C_{K, m}$ is de-emphasized: we may replace $C_{K, m}$ by \textit{any} contractible manifold (or homology ball) that $Y_{K, m}$ bounds. For instance, in the construction of $C_{K, m}$, we may use any slice disk for $K \# -K$ in place of the standard one. Note that $Y_{K, m}$ may also bound a contractible manifold (or homology ball) which is not constructed from a slice disk in such a manner.

As we discuss in Section~\ref{sec:2.1}, the swallow-follow operation is a longitudinal Dehn twist along an incompressible torus $T$ in $Y_{K, m}$, and it is natural to ask whether any other Dehn twists along $T$ make $C_{K, m}$ into a cork. In \cite{Gompfhandle}, it was shown that the meridional twist extends over $C_{K, m}$ for any $K$ and $m = \pm 1$. While we are not able to establish any examples of corks formed from meridional twists, the approach of this paper is suited to studying general Dehn twists along $T$. Our formalism can likewise be used to produce examples of corks constructed using a more flexible class of boundary diffeomorphisms than Dehn twists along $T$; see Section~\ref{sec:2.2}.

\subsection{Statement of results}\label{sec:1.1}

We now state our results. In Section~\ref{sec:2.5}, we define a Floer-theoretic condition on $K$ which we call \textit{$\Ds$-nontriviality}. For experts, this means that the Sarkar map $s$ is homotopically nontrivial on the $(\iota_K$-$)$connected complex of $K$. Denote the longitudinal twist by $\tl$ and the meridional twist by $\tm$. We prove:

\begin{theorem}\label{thm:1.2}
If $K$ is $\Ds$-nontrivial, then $(Y_{K, m}, \tl^i \tm^j)$ is a strong cork for all $(m, i, j) \in \Z^3$ with $m$ and $i$ both odd.
\end{theorem}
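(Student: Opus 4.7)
The strategy is to obstruct extension of $f = \tl^i \tm^j$ over any homology ball by showing that $f$ acts nontrivially on a local-equivalence invariant derived from involutive Heegaard Floer homology. If $f$ extended over a homology ball $W$ with $\partial W = Y_{K,m}$, then functoriality would force the induced $f_*$ to be locally equivalent to the identity on this invariant; my goal is to produce a chain-level obstruction to this whenever $m$ and $i$ are odd.

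I would first relate the involutive Floer complex of $Y_{K,m}$ to the knot Floer data of $K$. Applying the involutive $1/m$-surgery mapping-cone formula to $K \# -K$, together with the natural $\iota_K$-symmetry coming from the pairing with $-K$, the local-equivalence class of $Y_{K,m}$ should be extractable from the $(\iota_K$-$)$connected complex $\Cconn$ of $K$. Next, I would compute the endomorphisms induced by the boundary diffeomorphisms on $\Cconn$. The key expectation is that the swallow-follow $\tl$ realizes the Sarkar map $s$ on $\Cconn$ up to chain homotopy: geometrically $\tl$ is a longitudinal Dehn twist along the splitting torus, which under the surgery formula should correspond to the basepoint-moving map on $\CFKm(K \# -K) \simeq \CFKm(K) \otimes \CFKm(-K)$, and when symmetrized across the two summands should reduce to Sarkar's map on $\Cconn$. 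The meridional twist $\tm$, by contrast, should act by a map whose powers do not contribute obstructively on $\Cconn$ (e.g., it vanishes, or acts by a local-equivalence that cancels the $\tm^j$ contribution entirely, which is consistent with the fact that $\tm$ extends over $C_{K,\pm 1}$).

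The parity hypothesis then enters algebraically. Since $s \circ s \simeq 0$ on $\Cconn$, the induced map $(\tl^i)_*$ is chain-homotopic to $\id + s$ when $i$ is odd and to $\id$ when $i$ is even. The odd-$m$ hypothesis ensures the mapping cone restricts cleanly to the single relevant torsion $\spinc$ structure without producing extra terms that could absorb the Sarkar contribution; for even $m$ additional pieces appear and the reduction to $\Cconn$ is less clean. Consequently, when both $m$ and $i$ are odd, $(\tl^i \tm^j)_* - \id \simeq s$ on $\Cconn$, which is not nullhomotopic by $\Ds$-nontriviality. This is incompatible with any homology-ball extension of $f$, proving Theorem~\ref{thm:1.2}.

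The principal obstacle is the identification of $(\tl)_*$ with the Sarkar map $s$ on $\Cconn$. Chain-level control of a Dehn twist along the splitting torus, pushed through the involutive surgery formula, is delicate: it requires explicit diagrammatic tracking of the swallow-follow operation, verification that the resulting chain map is naturally equivariant under $\iota_K$, and checking that the reduction from the full tensor product $\CFKm(K) \otimes \CFKm(-K)$ down to $\Cconn$ sends this map to the Sarkar map on the connected complex. I expect the preceding sections of the paper establish precisely this diagrammatic and algebraic machinery, reducing Theorem~\ref{thm:1.2} to the clean algebraic assertion that $\Ds$-nontriviality prevents $s$ from being nullhomotopic.
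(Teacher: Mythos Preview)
Your outline has the right ingredients---the Sarkar map, the connected complex, the parity of $i$---but the logical skeleton is off in several places, and the crucial step is missing.

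First, an algebraic slip: $s = \id + \Phi\Psi$ satisfies $s^2 \simeq \id$, not $s \circ s \simeq 0$. What is nilpotent is $\Phi\Psi$, not $s$ itself. With the correct relation, $\tl^i$ acts by $s$ for $i$ odd and by $\id$ for $i$ even; it never acts by $\id + s = \Phi\Psi$. Second, and more seriously, your obstruction is framed as ``$\phi_* - \id$ is not nullhomotopic on $\Cconn$,'' i.e., as a statement about the action of $\phi$ on a single complex. That is not what a homology-ball extension actually forces, and in fact the paper's Remark~\ref{rem:2.6} warns that tracking only the action of $\phi$ (or only $\iota$, or only $\iota\phi$) is insufficient for these examples. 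What an extension of $\phi$ over a homology ball $W$ gives, via Lemma~\ref{lem:2.5}, is a local map of $(\phi,\iota)$-complexes from the trivial complex $(\F[U],\id,\id)$ into $(\CFm(Y_{K,m}),\phi,\iota)$. The paper then (Theorem~\ref{thm:4.1}, which rests on Lemma~\ref{lem:C_1<C_2} and the K\"unneth formula of Lemma~\ref{lem:4.2}) dualizes this across the tensor decomposition $\CFK(K\#-K)\simeq\CFK(K)\otimes\CFK(-K)$ to produce a local map $f\colon (\CFK(K),\id,\iota_K)\to(\CFK(K),s,\iota_K)$, i.e., an $\iota_K$-self-local map satisfying $s\circ f \simeq f$. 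The decisive lemma you are missing is Lemma~\ref{lem:2.10}: any self-local map of the connected complex is a chain \emph{isomorphism}. Pushing $f$ down to $\Cconn$ gives such a map $\tilde f$, and $s\tilde f\simeq\tilde f$ with $\tilde f$ invertible forces $s\simeq\id$ on $\Cconn$, contradicting $\Ds$-nontriviality. Without this rigidity step, there is no way to pass from ``there exists some local map intertwining $\id$ and $s$'' to ``$s\simeq\id$.''

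Two smaller points. The paper never invokes an involutive $1/m$-surgery formula; instead it builds explicit negative-definite spin cobordisms (Lemma~\ref{lem:3.4}) from $S^3_{1/m}(K\#-K)$ to large surgery and then applies the large-surgery identification (Lemma~\ref{lem:3.3}). The odd-$m$ hypothesis enters precisely here: the cobordism $W_2$ of Lemma~\ref{lem:3.4} is spin only for the relevant parity, so that the unique self-conjugate $\spinc$-structure exists and Lemma~\ref{lem:2.5} applies. It has nothing to do with ``the mapping cone restricting cleanly to a single $\spinc$ structure.'' Finally, $\tm$ acts trivially for a simple reason: it is isotopic to the identity rel $K$, so there is nothing to analyze.
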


The condition of being $\Ds$-nontrivial is quite mild, as the following computation indicates:

\begin{corollary}\label{cor:1.3}
Let $K$ be a Floer-thin knot satisfying
\begin{equation}\label{eq:1.1}
2 \Arf(K) + |\tau(K)| \equiv 1 \text{ or } 2 \bmod 4.\footnote{For experts, this simply means that the number of ``box subcomplexes" in the local equivalence class of $K$ is odd. See \cite[Section 8]{HM}.}
\end{equation}
Then $(Y_{K, m}, \tl^i \tm^j)$ is a strong cork for all $(m, i, j) \in \Z^3$ with $m$ and $i$ both odd.
\end{corollary}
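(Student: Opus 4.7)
The plan is to reduce the corollary to Theorem~\ref{thm:1.2} by verifying that any Floer-thin knot $K$ satisfying \eqref{eq:1.1} is automatically $\Ds$-nontrivial, i.e.\ that the Sarkar map $s$ is homotopically essential on the $\iota_K$-connected complex of $K$. Once this reduction is in place, Theorem~\ref{thm:1.2} supplies the strong-cork conclusion for every triple $(m, i, j) \in \Z^3$ with $m$ and $i$ odd, and no further geometric input is required.

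First, I would invoke the classification of $\iota_K$-complexes of Floer-thin knots up to local equivalence, as developed by Hendricks-Manolescu and subsequently refined (e.g.\ by Dai-Hom-Stoffregen-Truong). Their structure theorem says that the $\iota_K$-complex of such a $K$ splits, up to local equivalence, as a single ``staircase-type'' summand (whose size is governed by $\tau(K)$) together with $n(K)$ copies of a standard four-generator ``box'' summand $B$, on which $\iota_K$ acts by a prescribed diagonal involution. Local equivalence descends to the connected complex, so the $\iota_K$-connected complex inherits the same decomposition, with the same count $n(K)$. The footnote to \eqref{eq:1.1} identifies the parity of $n(K)$ with the classical invariants: $n(K)$ is odd exactly when $2\Arf(K) + |\tau(K)| \equiv 1, 2 \pmod 4$, and I would take this counting step as known.

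Next, I would analyze the Sarkar map $s$ summand by summand. On a staircase-type summand, $\iota_K^2 = \id$ strictly, so $s$ is null-homotopic there. On a single box summand $B$, unwinding the definition of $s$ on the explicit four-generator model should show that $s$ is chain-homotopic to a nonzero map (morally, the map sending the top generator of $B$ to the bottom generator), and hence represents a nonzero class in $[B, B]$. Because $s$ respects the summand decomposition, its homotopy class on the connected complex is the direct sum of the summand-wise contributions, so $s$ is essential on the total complex precisely when $n(K)$ is odd. Combining this with the counting step above gives $\Ds$-nontriviality whenever \eqref{eq:1.1} holds, and Theorem~\ref{thm:1.2} then completes the proof.

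I expect the main technical obstacle to be the explicit chain-level check that $s$ is essential on the box $B$. This requires unpacking the definition of the Sarkar map as a homotopy between $\iota_K^2$ and the identity (together with the appropriate Sarkar correction), and then verifying on the four-generator model that the resulting map cannot be written as a chain boundary in $\Hom(B, B)$. The staircase case is considerably easier, since it reduces to the on-the-nose identity $\iota_K^2 = \id$, and once both summand-wise computations are in hand the remainder of the argument is bookkeeping.
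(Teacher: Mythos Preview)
Your overall strategy---reduce to Theorem~\ref{thm:1.2} by showing that a thin knot satisfying \eqref{eq:1.1} is $\Ds$-nontrivial---is exactly the paper's. The gap is in how you make parity enter.

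You assert that the $\iota_K$-connected complex ``inherits the same decomposition, with the same count $n(K)$'' of box summands, and then that $s$ is essential ``precisely when $n(K)$ is odd'' because the summand-wise contributions add up. Neither step is right. First, the connected complex of a thin knot does \emph{not} retain all of the boxes: pairs of boxes cancel under $\iota_K$-local equivalence, so by \cite[Proposition~8.2]{HM} the connected complex is the staircase together with either zero boxes or a single box, according to the parity of the original count. Second, your parity heuristic for $s$ fails on its own terms. On a direct sum $B^{\oplus k}$ the differential is block-diagonal, so for any homotopy $h$ the diagonal blocks of $[\partial,h]$ are $[\partial_B,h_{ii}]$; hence if $\Phi\Psi$ is not null-homotopic on a single $B$, it is not null-homotopic on $B^{\oplus k}$ for \emph{every} $k\ge 1$, odd or even. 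Parity does not arise from summing homotopy classes of $s$---it arises from the local-equivalence cancellation that shrinks the connected complex before one ever looks at $s$.

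The paper follows the corrected line: it first argues (via \cite[Proposition~8.2]{HM} together with an arithmetic check relating \eqref{eq:1.1} to the determinant expression $(D-2|\tau|-1)/4$ through the congruence $D+4\Arf(K)\equiv \pm 1\bmod 8$) that the connected complex is a staircase plus exactly one box, and then verifies on that small model that $s$ sends the top box generator to the bottom one and is not homotopic to the identity. A minor slip as well: on the staircase you should say $s\simeq\id$, not ``$s$ is null-homotopic''; the Sarkar map satisfies $s\simeq\iota_K^2$, and $\Ds$-nontriviality asks whether $s\not\simeq\id$.
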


See for example \cite{petkovacables} for a discussion of Floer-thin knots. Note that since all alternating or quasi-alternating knots are Floer-thin, Corollary~\ref{cor:1.3} greatly expands the set of corks arising from Gompf's construction. All double-twist knots $\kappa(r, -s)$ (with $r$ and $s$ positive) considered in \cite{gompf2017infinite} are alternating. It is not hard to check that $\kappa(r, -s)$ satisfies \eqref{eq:1.1} precisely when $r$ and $s$ are both odd; this includes the simplest example of $K = 4_1$. In order to emphasize the flexibility of our approach, we list the set of knots with eight or fewer crossings to which Theorem~\ref{thm:1.2} (via Corollary~\ref{cor:1.3}) applies. These are:
\[
4_1, 5_2, 6_3, 7_4, 7_5, 7_7, 8_1, 8_2, 8_6, 8_7, 8_{12}, 8_{13}, 8_{14}, 8_{15}, 8_{17}, 8_{18}, \text{and } 8_{21}.
\]
Of these, only $4_1$ and $8_1$ appear in \cite{gompf2017infinite}, illustrating the wide applicability of Theorem~\ref{thm:1.2}. (On the other hand, the knots $6_1$ and $8_3$ are covered by \cite{gompf2017infinite} but are not included in Theorem~\ref{thm:1.2}.)

We are also able to use Theorem~\ref{thm:1.2} to produce examples of corks arising from connected sums of torus knots. This is particularly interesting in light of the results of \cite{Gompfhandle} and \cite{RayRuberman}, which show that if $K$ is a torus knot, then $C_{K, \pm 1}$ does not constitute a cork for any twist in $H_1(T, \Z)$. In contrast, we prove that applying Gompf's construction to the connected sum of torus knots often produces a cork. Indeed, it is straightforward to check that Corollary~\ref{cor:1.3} even applies to the simplest connected sum of torus knots $K = T_{2, 3} \# T_{2, 3}$. More generally, in Corollaries~\ref{cor:trefoil_connected} and \ref{cor:nonthin} we show that Theorem~\ref{thm:1.2} applies to the families
\[
K = T_{2, 2n + 1} \# T_{2, 2n +1} \quad \text{and} \quad K = -2T_{2n, 2n+1} \# T_{2n, 4n+1}
\]
for $n$ odd. Note the latter class of knots is \textit{not} Floer-thin. In general, the condition of $\Ds$-nontriviality is fairly mild and can be verified for many non-thin knots; see Section~\ref{sec:2.5} for further discussion.

We again emphasize that our approach to Question~\ref{q:1.1} is rather different than the one in \cite{gompf2017infinite} and does not consist of finding embeddings of corks into specific closed $4$-manifolds. Instead, we proceed by analyzing the induced action of $\tl$ (and $\tm$) on the Heegaard Floer homology of $Y_{K, m}$. This action is defined due to the work of Juh\'asz-Thurston-Zemke \cite{JTZ} regarding the action of the mapping class group on Heegaard Floer homology. Such ideas were first used to study branched double covers of knots by Alfieri-Kang-Stipsicz \cite{AKS}. A systematic application to corks was carried out by Dai-Hedden-Mallick \cite{DHM}; see also the work of Lin-Ruberman-Saveliev using monopole Floer homology \cite{LRS}.

\begin{remark}\label{rem:1.4}
The authors expect that the ideas of the present paper can likely be strengthened to show that the corks in Theorem~\ref{thm:1.2} are infinite-order. At the moment, however, there are certain technical obstructions to doing this. As a first step, we would need to obtain an appropriate set of naturality results for Heegaard Floer theory with $\Z$-coefficients, together with a definition of the Floer cobordism maps over $\Z$; see \cite{Gartner} for progress in this direction.
\end{remark}

Our methods can also be extended to analyze a more general class of self-diffeomorphisms defined on knot surgeries, which we describe in Section~\ref{sec:2.2}. Indeed, fix any knot $K$ in $S^3$ and let $\phi$ be a relative self-diffeomorphism of $(S^3, K)$. This induces a self-diffeomorphism of any surgered manifold $\smash{S^3_{1/n}(K)}$, which by abuse of notation we also denote by $\phi$. In Section~\ref{sec:3}, we describe a sufficient condition for the pair $\smash{(S^3_{1/m}(K), \phi)}$ to be a strong cork in terms of the local equivalence class of the triple $(\CFK(K), \phi, \iota_K)$. In Section~\ref{sec:2.6}, we define an integer-valued Fr\o yshov-type invariant
\[
\delta(K, \phi) \ge 0
\]
which may be computed from $\CFK(K)$ (with the actions of $\phi$ and $\iota_K$) and completely characterizes the existence of a local map from the trivial complex into $(\CFK(K), \phi, \iota_K)$. We prove:
\begin{theorem}
\label{thm:numerical-intro}
 If $\delta(K, \phi)>0$, then $(S^3_{1/m}(K), \phi)$ is a strong cork for any $m$ positive and odd.
\end{theorem}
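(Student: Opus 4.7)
The plan is to obstruct the extension of $\phi$ over any homology ball bounded by $S^3_{1/m}(K)$ by comparing local equivalence classes in involutive Heegaard Floer homology. The overall template, following Dai--Hedden--Mallick \cite{DHM} and Lin--Ruberman--Saveliev \cite{LRS}, is that if a boundary diffeomorphism $\phi$ extends to a diffeomorphism $\Phi$ of a homology ball $W$ with $\partial W = Y$, then naturality of Heegaard Floer homology forces the induced action $\phi_*$ on $\CFIm(Y)$ to be locally equivalent to $\id$ through the local equivalence $\CFIm(Y) \simeq \CFIm(S^3)$ provided by $W$. Concretely, the cobordism map $F_W \colon \CFIm(S^3_{1/m}(K)) \to \CFIm(S^3)$ is a local equivalence, and the identity $\Phi|_{\partial W} = \phi$ gives $F_W \circ \phi_* \simeq F_W$; any obstruction at the level of involutive local equivalence classes therefore obstructs the existence of such a $W$.

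The next step is to translate this obstruction back to the knot Floer side using the involutive mapping cone formula for $1/m$-surgery. When $m$ is positive and odd, this formula identifies $\CFIm(S^3_{1/m}(K))$, up to local equivalence, with an explicit mapping cone built from $\CFKm(K)$ and $\iota_K$ in the self-conjugate $\spinc$ structure. The parity hypothesis is essential: it ensures that the relevant cone is the ``standard'' one built from $\iota_K$ alone, rather than one that also involves a $\spinc$-conjugation swap which would change the ambient category. Because $\phi$ is a relative diffeomorphism of $(S^3, K)$ and so is supported away from the surgery solid torus, its induced action on $\CFIm(S^3_{1/m}(K))$ descends from its natural action on the pair $(\CFKm(K), \iota_K)$.

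Combining the two steps, an extension $\Phi$ of $\phi$ over a homology ball $W$ would produce a local map from the trivial complex into $(\CFKm(K), \phi, \iota_K)$ in exactly the category used to define $\delta(K, \phi)$ in Section~\ref{sec:2.6}. By the characterization of $\delta$ recalled there, the hypothesis $\delta(K, \phi) > 0$ rules out any such local map, yielding the desired contradiction. This establishes that $\phi$ extends over no homology ball bounded by $S^3_{1/m}(K)$, i.e.\ that $(S^3_{1/m}(K), \phi)$ is a strong cork.

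The main obstacle I anticipate is establishing the $\phi$-equivariance of the involutive mapping cone formula: one must show that the identification of $\CFIm(S^3_{1/m}(K))$ with the mapping cone built from $(\CFKm(K), \iota_K)$ intertwines the geometrically induced action of $\phi$ on the surgered manifold with the action coming from $\phi$ on the knot Floer complex. This requires a careful unwinding of the functoriality of each step in the proof of the surgery formula, in particular controlling basepoint dependence and $\spinc$-conjugation data, and is where the parity assumption on $m$ enters in an essential way. Once this equivariance is in place, the remainder of the argument reduces to the formal manipulation of local equivalence classes outlined above.
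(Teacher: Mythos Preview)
Your overall strategy---obstruct the extension by ruling out a local map from the trivial complex---matches the paper's, but the route you propose to reach the knot Floer level is substantially different and heavier than what the paper actually does.

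The paper bypasses the involutive mapping cone formula entirely. Instead, it builds explicit equivariant negative-definite cobordisms: one from $S^3_{+1}(K)$ to $S^3_n(K)$ for $n$ large (via a chain of $(-1)$-framed meridional $2$-handles), and one from $S^3_{1/m}(K)$ to $S^3_{+1}(K)$ connect-summed with a lens space (via a single $2$-handle, turned around). The surgered diffeomorphism $\phi$ extends over both cobordisms trivially, because every attaching region lies in the neighborhood of $K$ that $\phi$ fixes pointwise. The parity hypothesis enters here, not on the surgery-formula side: the second cobordism has even intersection form---hence a self-conjugate $\spinc$-structure, hence a cobordism map commuting with $\iota$---precisely when $m$ is odd. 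Composing these cobordism maps with the equivariant large-surgery identification (Lemma~\ref{lem:3.3}) converts a hypothetical local map $(\F[U],\id,\id)\to(\CFm(S^3_{1/m}(K)),\phi,\iota)$ into a local map into $(A_0(K),\phi,\iota_K)$; Lemma~\ref{lem:d=0->implies-local-map} then forces $\delta(K,\phi)=0$, a contradiction. This is Theorem~\ref{thm:3.1}, from which the statement follows in one line.

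Your proposed route via a $\phi$-equivariant involutive mapping cone is plausible in principle, but proving that equivariance and then reducing the cone to $A_0$ in the $(\phi,\iota)$-local sense is a substantial project in its own right, whereas the cobordism argument uses only diffeomorphism-naturality of the Floer cobordism maps. More seriously, your stated explanation of the parity condition is incorrect: $S^3_{1/m}(K)$ is an integer homology sphere for every $m$, so it has a unique and automatically self-conjugate $\spinc$-structure, and there is no ``$\spinc$-conjugation swap'' in its mapping cone regardless of parity. If your argument went through as outlined, it would presumably work for all positive $m$; the restriction to odd $m$ in the theorem is an artifact of the spin condition on the paper's cobordism $W_2$ in Lemma~\ref{lem:3.4}, not of anything intrinsic to the surgery formula.
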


Gompf's construction is obtained by taking $\phi$ to be the swallow-follow operation on the connected sum $K \# -K$. In fact, the swallow-follow operation fits into a larger family of relative self-diffeomorphisms on composite knots of the form $K_1 \# K_2$; we call these split diffeomorphisms. Split diffeomorphisms are especially convenient from the point of view of Floer theory; further examples are given in Section~\ref{sec:5}.

The results of this paper may also be viewed as a generalization of the program of \cite{DHM} in the following sense: in \cite{DHM}, Floer-theoretic techniques were used to produce many novel families of (strong) corks via $1/n$-surgeries on classes of symmetric slice knots. This is in contrast to previous constructions of corks in the literature, which have generally focused on explicit handle decompositions of candidate contractible manifolds. Corollary~\ref{cor:1.3} vastly enlarges the set of (strong) corks arising as surgery on slice knots, with the swallow-follow operation playing the role of the knot symmetry in \cite{DHM}. Compare \cite[Theorem 1.11]{DHM}. We note that $\delta$ differs from the invariants defined in \cite{DHM} and that the examples presented here cannot be recovered from the formalism of \cite{DHM}; see Remark~\ref{rem:2.6}.

\noindent
\subsection*{Organization} In Section~\ref{sec:2}, we review the algebraic setup of Heegaard Floer homology and define the notion of $\Ds$-nontriviality. In Section~\ref{sec:3}, we prove a general cork-theoretic detection result for certain knot surgeries and use this to establish Theorem~\ref{thm:numerical-intro}. In Section~\ref{sec:4}, we apply this to prove Theorem~\ref{thm:1.2}. In Section~\ref{sec:5}, we prove Corollary~\ref{cor:1.3} and give further examples of strong corks detected using our obstructions. 

 %\subsection*{Funding} 

\subsection*{Acknowledgments} The authors would like to thank Kristen Hendricks, Jen Hom, Tye Lidman, and Maggie Miller for helpful conversations. ID was partially supported by NSF grant DMS-2303823. AM was partially supported by NSF grant DMS-2019396. IZ was partially supported by NSF grant DMS-2204375.

\section{Background}\label{sec:2}

In this section, we give a more precise definition of Gompf's cork and review some essential features of Heegaard Floer and knot Floer homology.

\subsection{Gompf's construction}\label{sec:2.1}
Let $K_1$ and $K_2$ be any pair of knots in $S^3$. Define a self-diffeomorphism $\tl$ of $(S^3, K_1)$ as follows: denote the boundary of a tubular neighborhood of $K_1$ by $T$ and let $T \times [-1, 1]$ be a neighborhood of $T$ which does not intersect $K_1$. On $T \times [-1, 1]$, define $\tl$ to be the trace  of an isotopy which rotates $T$ once around so that a point on $T$ sweeps out an oriented longitude of $K_1$. On the complement of $T \times [-1, 1]$, define $\tl$ to be the identity. We refer to $\tl$ as the \textit{longitudinal twist}; the \textit{meridional twist} $\tm$ is defined similarly. Note that $\tl$ and $\tm$ fix $K_1$ pointwise.

Now form the connected sum $K_1 \# K_2$ by placing $K_2$ in a small ball which is disjoint from $[-1, 1] \times T$ and on the same side of $T$ as $K_1$. Then $\tl$ and $\tm$ also define self-diffeomorphisms of the pair $(S^3, K_1 \# K_2)$, which we likewise denote by $\tl$ and $\tm$. Since these similarly fix $K_1 \# K_2$ pointwise, they induce self-diffeomorphisms of any surgered manifold $S^3_r(K_1 \# K_2)$. Abusing notation, we again denote these surgered diffeomorphisms by $\tl$ and $\tm$. See Figure~\ref{fig:2.1}. These diffeomorphisms were referred to as the \textit{torus-twists} by Gompf \cite{gompf2017infinite}.

%Let $K_1$ and $K_2$ be any pair of knots in $S^3$. Define a relative self-diffeomorphism $\tl$ of $(S^3, K_1 \# K_2)$ as follows. Place $K_2$ in a small ball near $K_1$. Let $T$ be the boundary of a tubular neighborhood of $K_1$ which engulfs $K_2$, and let $T \times [-1, 1]$ be a neighborhood of $T$ that does not intersect $K_1 \# K_2$. On $T \times [-1, 1]$, define $\tl$ to be the track of an isotopy that rotates $T$ once around so that a point on $T$ sweeps out an oriented longitude of $K_1$. On the complement of $T \times [-1, 1]$, define $\tl$ to be the identity. We refer to $\tl$ as the \textit{longitudinal twist}; the \textit{meridional twist} $\tm$ is defined similarly. Since $\tl$ and $\tm$ fix $K_1 \# K_2$ pointwise, they induce self-diffeomorphisms of any surgered manifold $S^3_r(K_1 \# K_2)$. By abuse of notation, we likewise denote these latter self-diffeomorphisms by $\tl$ and $\tm$. See Figure~\ref{fig:2.1}. 

\begin{figure}[h!]
\center
\includegraphics[scale=0.6]{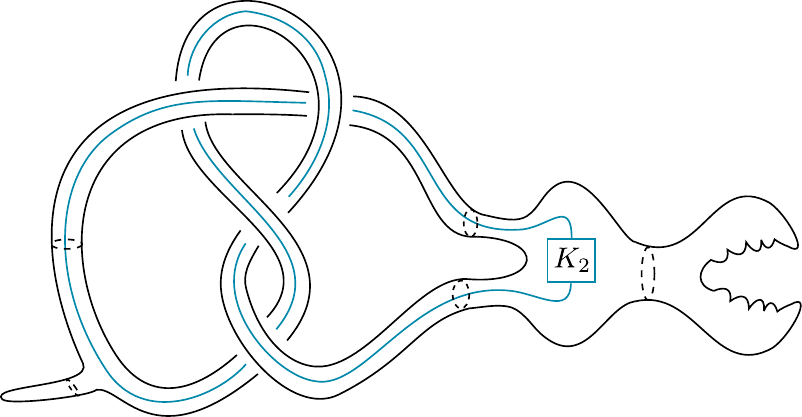}
\caption{The \textit{swallow-follow torus} $T$ in the case where $K_1 = 4_1$. The torus is following $K_1$ while swallowing $K_2$.}
\label{fig:2.1}
\end{figure}
Now suppose $K_2$ is inverse to $K_1$ in the concordance group, so that $K_1 \# K_2$ bounds a slice disk $D$. For any $m \in \Z^{\neq 0}$, define $C_{D, m}$ by cutting out $D$ from $B^4$ and attaching a $2$-handle along a meridian of $K_1 \# K_2$ with framing $-m$. It can be checked that $C_{D, m}$ is a contractible manifold and that
\[
S^3_{1/m}(K_1 \# K_2) = \partial C_{D, m}.
\]
Note that different choices of $D$ will in general give different contractible manifolds. Importantly, it is \textit{not} clear whether or not the self-diffeomorphisms $\tl$ and $\tm$ extend over $C_{D, m}$, or whether or not this fact is independent of $D$. Moreover, $\smash{S^3_{1/m}(K_1 \# K_2)}$ may bound contractible manifolds or homology balls which are not constructed from a slice disk in such a fashion. 

Gompf's construction is obtained by specializing to the case where $K_1 = K$, $K_2 = -K$, and $D$ is the standard ribbon disk for $K \# - K$, in which case we denote $C_{D, m}$ by $C_{K, m}$.\footnote{Our conventions differ slightly from \cite{gompf2017infinite}: Gompf's notation $C(\kappa, m)$ has boundary given by $-1/m$-surgery on $K \# -K$, and thus corresponds to our $C_{K, -m}$.} In \cite[Theorem 1.2]{gompf2017infinite}, it is shown that if $K$ is taken from a certain family of double-twist knots $\kappa(r, -s)$, then $C_{K, m}$ can be embedded in a blown-up elliptic surface such that cork twists by powers of $\tl$ give pairwise nondiffeomorphic $4$-manifolds (distinguished by the Seiberg-Witten invariants). In particular, no power of $\tl$ extends as a diffeomorphism over $C_{K, m}$. 

\subsection{Generalizing Gompf's construction}\label{sec:2.2}
Although we will primarily be interested in the swallow-follow diffeomorphism, Gompf's construction can be placed in a more general context as follows:

\begin{definition}\label{def:2.1}
A \textit{relative (self-)diffeomorphism of $(S^3, K)$} is an orientation-preserving self-diffeomorphism of $(S^3, K)$ which fixes a neighborhood $N(K)$ of $K$ pointwise. If $\phi$ is a relative diffeomorphism of $(S^3, K)$, then $\phi$ induces a self-diffeomorphism of any surgery along $K$ by choosing the surgery solid torus to lie in $N(K)$. By abuse of notation, we denote the resulting diffeomorphism again by $\phi$ and refer to it as the corresponding \textit{surgered diffeomorphism}.
\end{definition}

In this paper, we will be interested in a particular class of relative diffeomorphisms:

\begin{definition}\label{def:2.2}
Let $\phi_1$ and $\phi_2$ be relative diffeomorphisms of $(S^3, K_1)$ and $(S^3, K_2)$, respectively. Let $B_1$ be a small ball intersecting $K_1$ which is fixed by $\phi_1$, and similarly for $B_2$. We obtain a relative diffeomorphism $\phi_1 \# \phi_2$ of $(S^3, K_1 \# K_2)$ by forming the connected sum $(S^3, K_1) \# (S^3, K_2)$ along these balls. We refer to a self-diffeomorphism constructed in this manner as a \textit{split diffeomorphism}.
\end{definition}

The swallow-follow diffeomorphism is the split diffeomorphism $\tl \# \id$ on $K \# - K$ obtained by putting the longitudinal twist $\tl$ on the first factor and the identity on the second. (In the context of Gompf's construction, we often write $\tl$ in place of $\tl \# \id$ when our meaning is clear.) We give further examples of split diffeomorphisms in Section~\ref{sec:5}. In general, if $K$ is slice, then any relative diffeomorphism $\phi$ of $(S^3, K)$ gives rise to a candidate family of strong corks by considering the surgeries $\smash{(S^3_{1/m}(K), \phi)}$. If $K = K_1 \# K_2$, the sliceness condition can of course be tautologically manufactured by choosing $K_2$ to be a concordance inverse of $K_1$.

As we will see, another reason for considering the class of split diffeomorphisms is that the action of $\phi_1 \# \phi_2$ on $\CFK(K_1 \# K_2)$ is straightforward to understand. Indeed, as the name suggests, the action of $\phi_1 \# \phi_2$ on $\CFK(K_1 \# K_2)$ may be identified with the action of $\phi_1 \otimes \phi_2$ on $\CFK(K_1) \otimes \CFK(K_2)$. This will allow us to formulate a more concise Floer-theoretic condition for detecting corks.

\subsection{Heegaard Floer homology}\label{sec:2.3}
We now give a brief overview of the background in Heegaard Floer homology necessary for our proof. We assume that the reader has a broad familiarity with the Heegaard Floer package \cite{OS3manifolds1, OS3manifolds2} as well as a general understanding of the involutive Heegaard Floer formalism of Hendricks-Manolescu \cite{HM} and Hendricks-Manolescu-Zemke \cite{HMZ}.

Let $Y$ be a rational homology sphere and $\s$ be a self-conjugate $\spinc$-structure on $Y$. There are two automorphisms of $\CFm(Y, \s)$ that we consider in this paper. Firstly, in \cite{HM} Hendricks and Manolescu defined the Heegaard Floer involution $\iota$:
\[
\iota: \CFm(Y, \s) \rightarrow \CFm(Y, \s).
\]
This is a grading-preserving, $\F[U]$-equivariant homotopy involution on $\CFm(Y, \s)$. Secondly, suppose that $Y$ is equipped with a self-diffeomorphism $\phi$. For the sake of brevity, we will often refer to $(Y, \phi)$ as an \textit{equivariant (rational) homology sphere}. By work of Juh\'asz-Thurston-Zemke \cite{JTZ}, for each $\spinc$-structure $\s$ on $Y$ such that $\phi_*(\s) = \s$, we obtain an induced action
\[
\phi: \CFm(Y, \s) \rightarrow \CFm(Y, \s),
\] 
which by abuse of notation we also denote by $\phi$. This is a grading-preserving, $\F[U]$-equivariant chain map from $\CFm(Y, \s)$ to itself.\footnote{Strictly speaking, $\phi$ should be an element of the based mapping class group. However, if $Y$ is a rational homology sphere, then it follows from \cite[Theorem D]{Zemkegraphcobord} that this condition can be relaxed; see for example \cite[Lemma 4.1]{DHM}.} Note that the action of $\phi$ has a homotopy inverse given by the action of $\phi^{-1}$. It is straightforward to show that $\iota$ and $\phi$ homotopy commute; see for example \cite[Lemma 4.4]{DHM}.

We formalize this information in the following abstract definition:

\begin{definition}\label{def:2.3}
A \textit{$(\phi, \iota)$-complex} consists of the following:
\begin{enumerate}
\item A free, finitely-generated, $\Q$-graded chain complex $C$ over $\F[U]$ such that 
\[
U^{-1}H_*(C) \cong \F[U, U^{-1}]. 
\]
We require $C$ to be graded by a coset of $\Z$ in $\Q$ with $\deg(\partial) = -1$ and $\deg(U) = -2$.
\item Grading-preserving, $\F[U]$-equivariant chain maps $\phi \colon C \rightarrow C$ and $\iota \colon C \rightarrow C$ such that $\phi$ admits a homotopy inverse, $\iota$ is a homotopy involution, and $\phi$ and $\iota$ commute up to homotopy.
\end{enumerate}
A \textit{morphism} (or \textit{map}) $f$ from $(C_1, \phi_1, \iota_1)$ to $(C_2, \phi_2, \iota_2)$ is a grading-preserving, $\F[U]$-equivariant chain map from $C_1$ to $C_2$ such that $f \phi_1 \simeq \phi_2 f$ and $f \iota_1 \simeq \iota_2 f$. A \textit{homotopy equivalence} of $(\phi, \iota)$-complexes consists of a pair of morphisms $f$ and $g$ between them that are homotopy inverses. We denote chain homotopy by $\simeq$. 
\end{definition}

If $\phi$ is a relative diffeomorphism of $(S^3, K)$, then $\phi$ acts trivially on the homology of the complement of $K$. It follows that the surgered diffeomorphism acts as the identity on the set of $\spinc$-structures on any surgery along $K$. Hence for any self-conjugate $\spinc$-structure $\s$, the triple
\[
(\CFm(S^3_r(K), \s), \phi, \iota)
\]
is a $(\phi, \iota)$-complex in the sense of Definition~\ref{def:2.3}.

\begin{definition}\label{def:2.4}
Let $f \colon C_1 \rightarrow C_2$ be a morphism of $(\phi, \iota)$-complexes. We say that $f$ is \textit{local} if the induced map 
\[
f_* \colon U^{-1} H_*(C_1) \cong \F[U, U^{-1}] \rightarrow U^{-1}H_*(C_2) \cong \F[U, U^{-1}]
\]
is an isomorphism. If there are local maps in both directions between $C_1$ and $C_2$, then we say that $C_1$ and $C_2$ are \textit{locally equivalent}. Occasionally, we will refer to $f$ as local even if it is only grading-homogeneous (rather than grading-preserving), so long as it homotopy commutes with $\phi$ and $\iota$ and satisfies the localization condition of Definition~\ref{def:2.4}.
\end{definition}

The importance of Definition~\ref{def:2.4} is given by the following simple lemma. We say that a cobordism $W$ from $(Y_1, \phi_1)$ to $(Y_2, \phi_2)$ is \textit{equivariant} if there exists a self-diffeomorphism $\phi$ of $W$ which restricts to $\phi_i$ on $Y_i$.

\begin{lemma}\label{lem:2.5}
Let $(W, \phi)$ be an equivariant negative-definite cobordism with $b_1(W) = 0$ between equivariant homology spheres $(Y_1, \phi_1)$ and $(Y_2, \phi_2)$. Suppose that there exists a self-conjugate $\spinc$-structure $\s$ on $W$ such that $\phi_*(\s) = \s$. Then
\[
F_{W, \s} \colon (\CFm(Y_1, \s|_{Y_1}), \phi_1, \iota_1) \rightarrow (\CFm(Y_2, \s|_{Y_2}), \phi_2, \iota_2)
\]
is a local map, up to grading shift.
\end{lemma}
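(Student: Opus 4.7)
The plan is to verify each of the three conditions making $F_{W, \s}$ a local map in the sense of Definition~\ref{def:2.4}, namely $\F[U]$-equivariance (which is standard), homotopy commutation with the two actions $\phi_i$ and $\iota_i$, and the localization/isomorphism-on-$\HFinf$ statement. The grading shift by $(c_1(\s)^2 - 2\chi(W) - 3\sigma(W))/4$ is the usual one attached to $F_{W, \s}$ and is harmless in our definition of local map.

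First I would dispense with the commutation with $\phi_i$. This is pure diffeomorphism naturality of the Heegaard Floer cobordism maps, applied to the self-diffeomorphism $\Phi$ of $W$ restricting to $\phi_i$ on $Y_i$. The naturality statement in \cite{JTZ} gives a homotopy
\[
F_{W, \Phi_*\s} \simeq \phi_2 \circ F_{W, \s} \circ \phi_1^{-1}.
\]
Since by hypothesis $\Phi_*(\s) = \s$, the left-hand side equals $F_{W, \s}$, so $\phi_2 \circ F_{W, \s} \simeq F_{W, \s} \circ \phi_1$, as required.

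Next I would verify commutation with $\iota_i$. The essential input is the HMZ conjugation-naturality formula for Ozsv\'ath-Szab\'o cobordism maps: for any $\spinc$-cobordism $(W, \s)$ between rational homology spheres one has
\[
\iota_2 \circ F_{W, \s} \simeq F_{W, \bar{\s}} \circ \iota_1.
\]
Since $\s$ is self-conjugate, $\bar{\s} = \s$, and we conclude $\iota_2 \circ F_{W, \s} \simeq F_{W, \s} \circ \iota_1$. Together with the previous paragraph this shows $F_{W, \s}$ is a morphism of $(\phi, \iota)$-complexes in the sense of Definition~\ref{def:2.3}.

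Finally, for the localization condition, I would invoke the classical Ozsv\'ath-Szab\'o computation of $F_{W, \s}$ on $\HFinf$: since $W$ is negative-definite with $b_1(W) = 0$ and $\s$ is a $\spinc$-structure on $W$ restricting to $\s|_{Y_i}$ on the boundary, the induced map
\[
F_{W, \s}^{\infty} \colon \HFinf(Y_1, \s|_{Y_1}) \longrightarrow \HFinf(Y_2, \s|_{Y_2})
\]
is an isomorphism of $\F[U, U^{-1}]$-modules, up to the usual grading shift. Passing to $U^{-1}H_*$ of the minus complexes gives exactly the isomorphism required by Definition~\ref{def:2.4}, completing the proof. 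The only subtle step is the $\iota$-equivariance: strictly speaking, the HMZ naturality statement is formulated for specific Heegaard diagrams and requires one to make compatible choices of Heegaard moves, but once these are in place the argument is formal and has appeared in this form in \cite{HMZ} and \cite{DHM}.
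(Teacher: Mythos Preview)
Your proof is correct and follows essentially the same approach as the paper's own proof, which simply asserts the three ingredients as standard: the isomorphism on $U^{-1}\HFm$ from the Ozsv\'ath--Szab\'o computation for negative-definite cobordisms with $b_1=0$, the $\iota$-equivariance from the conjugation relation (since $\s$ is self-conjugate), and the $\phi$-equivariance from diffeomorphism naturality of the cobordism maps (the paper cites \cite[Theorem A]{Zemkegraphcobord} and \cite[Proposition 4.10]{DHM} here rather than \cite{JTZ}, but the content is the same). You have just written out each step in a bit more detail.
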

\begin{proof}
It is a standard fact that $F_{W, \s}$ induces an isomorphism on $U^{-1} \HFm$ and that it satisfies the relation $F_{W, \s} \circ \iota_1 \simeq \iota_2 \circ F_{W,s}$. The fact that $F_{W, \s} \circ \phi_1 \simeq \phi_2 \circ F_{W,s}$ follows from \cite[Theorem A]{Zemkegraphcobord}; see for example \cite[Proposition 4.10]{DHM}.
\end{proof}

In order to establish that a given pair $(Y, \phi)$ is a strong cork, it thus suffices to prove that there is no local equivalence between the complex of $S^3$, which is given by $(\F[U], \id, \id)$, and the complex $(\CFm(Y), \phi, \iota)$. 

%In Section [REF], we provide a concise obstruction to the existence of such a local equivalence, adapted to the setting where $\phi$ is a (surgered) split diffeomorphism. 

\begin{remark}\label{rem:2.6}
One can form a \textit{local equivalence group} by taking the set of all $(\phi, \iota)$-complexes and quotienting out by the notion of local equivalence. In the context of involutive Heegaard Floer homology, the notion of local equivalence first appeared in \cite{HMZ} and was subsequently utilized in \cite{DHM} to study corks and symmetries of manifolds. For experts, we note that Definition~\ref{def:2.3} qualitatively differs from previous such constructions by simultaneously including \textit{two} automorphisms of $C$. Indeed, one obtains coarser local equivalence groups by considering only $(C, \iota)$ or $(C, \phi)$, or even $(C, \iota \phi)$ as in \cite{DHM}. However, none of these suffice to capture the nontriviality of the examples in this paper. The first example of this nature was observed in \cite{dai20222}, where the above formalism is implicit.
\end{remark}

\subsection{Knot Floer homology}\label{sec:2.4}

We assume that the reader is familiar with the interpretation of knot Floer homology as a free, finitely generated chain complex $\CFK(K)$ over $\bF[\scU,\scV]$. See e.g. \cite{zemke2019link}. Given any such complex, there are maps
\[
\Phi = \dfrac{d}{d \cU}(\partial) \quad \text{and} \quad \Psi = \dfrac{d}{d \cV}(\partial);
\]
see \cite[Section 3]{Zemkequasistab}. We define the \textit{Sarkar map} to be
\[
s = \mathrm{id} + \Phi \Psi.
\]
This was studied in the context of the basepoint-moving action on $\CFK(K)$ \cite{sarkar2015moving, Zemkequasistab}. As in the case of $3$-manifolds, Hendricks and Manolescu \cite{HM} defined a knot Floer map:
\[
\iota_K \colon \CFK(K) \rightarrow \CFK(K).
\]
This is a skew-graded, skew-equivariant map. Suppose moreover that $\phi$ is a relative diffeomorphism of $(S^3, K)$. By work of Juh\'asz-Thurston-Zemke \cite{JTZ}, we again obtain an induced action
\[
\phi \colon \CFK(K) \rightarrow \CFK(K)
\]
which we also denote by $\phi$. This is a grading-preserving $\F[\cU, \cV]$-equivariant map. It is straightforward to check that $\phi$ and $\iota_K$ homotopy commute. We formalize the structure of $\CFK(K)$ in the following definition:

\begin{definition}\label{def:2.7}
A \textit{$(\phi, \iota_K)$-complex} consist of the following:
\begin{enumerate}
\item  A free, finitely-generated, bigraded chain complex $C$ over $\mathbb{F}[\cU, \cV]$ such that we have 
\[
(\cU,\cV)^{-1}H_{*}(C) \cong (\cU,\cV)^{-1} \mathbb{F}[\cU, \cV]. 
\]
We denote the bigrading $\gr = (\gr_{\cU}, \gr_{\cV})$ We require $\deg(\partial) = (-1, -1)$, $\deg(\cU)=(-2,0)$, and $\deg(\cV)=(0,-2)$.
\item A grading-preserving, $\F[\cU, \cV]$-equivariant chain map $\phi \colon C \rightarrow C$ and a skew-graded, skew $\mathbb{F}[\cU,\cV]$-equivariant chain map $\iota_K: C \rightarrow C$ such that $\iota^{2}_K \simeq s = \mathrm{id} + \Phi \Psi$. We require that $\phi$ have a homotopy inverse and that $\phi$ and $\iota_K$ homotopy commute.
\end{enumerate}
A \textit{morphism} (or \textit{map}) $f$ from $(C_1, \phi_1, \iota_{K_1})$ to $(C_2, \phi_2, \iota_{K_2})$ is a grading-preserving, $\F[\cU, \cV]$-equivariant chain map from $C_1$ to $C_2$ such that $f \phi_1 \simeq \phi_2 f$ and $f \iota_{K_1} \simeq \iota_{K_2} f$. A \textit{homotopy equivalence} of $\iota_K$-complexes consists of a pair of morphisms $f$ and $g$ between them that are homotopy inverses. We denote homotopy equivalence by $\simeq$.
\end{definition}

As before, we have:

\begin{definition}\label{def:2.8}
Let $f \colon C_1 \rightarrow C_2$ be a morphism of $(\phi, \iota_K)$-complexes. We say that $f$ is \textit{local} if the induced map 
\[
f_* \colon (\cU,\cV)^{-1}H_{*}(C_1) \cong (\cU,\cV)^{-1} \mathbb{F}[\cU, \cV] \rightarrow (\cU,\cV)^{-1}H_{*}(C_2) \cong (\cU,\cV)^{-1} \mathbb{F}[\cU, \cV]
\] 
is an isomorphism. If there are local maps in both directions between $C_1$ and $C_2$, then we say that $C_1$ and $C_2$ are \textit{locally equivalent}. 
\end{definition}

Definitions~\ref{def:2.7} and \ref{def:2.8} can of course be repeated in the absence of a self-diffeomorphism $\phi$. (This is equivalent to setting $\phi = \id$ throughout.) Doing so recovers the notion of an $\iota_K$-complex as defined in \cite{Zemkeconnected}.

\subsection{$\Ds$-nontriviality}\label{sec:2.5}

We now define the notion of $\Ds$-nontriviality. For this, we recall the work of Hendricks-Hom-Lidman \cite{HHL} regarding the \textit{connected complex}; see also \cite[Section 5]{HKPS}. Roughly speaking, this should be thought of as the simplest representative of the local equivalence class of an $\iota_K$-complex $(C, \iota_K)$.

Let $\cC = (C, \iota_K)$ be an $\iota_K$-complex. We call a local map $f$ from $\cC$ to itself a \textit{self-local map}. One can define a pre-order $\lesssim$ on the set of self-local maps by declaring $f \lesssim g$ if $\mathrm{ker}f \subseteq \mathrm{ker}g$. A self-local map $f$ is \textit{maximal} if for any other self-local map $g$ with $f \lesssim g$, we must have $g \lesssim f$. In \cite[Lemma 3.4]{HHL} it is shown that if $f$ is a maximal self-local map, then $f|_{\im f} \colon \im f \rightarrow C$ is injective. Hence we may define $(\iota_K)_f \colon \im f \rightarrow \im f$ by 
\[
(\iota_K)_f = f \circ \iota_K \circ (f|_{\im f})^{-1}.
\]
It is easily checked that the pair $(\im f, (\iota_K)_f)$ is an $\iota_K$-complex. The same proof as in \cite[Lemma 3.8]{HHL} shows that the chain isomorphism class of $(\im f, (\iota_K)_f)$ is independent of the choice of maximal self-local map $f$.

\begin{definition}\label{def:2.9}
Let $\cC = (C, \iota_K)$ be an $\iota_K$-complex. We define the $\iota_K$-\textit{connected complex} to be (the homotopy equivalence class of) 
\[
\cC_{\mathrm{conn}} = (\Cconn, \iota_{\mathrm{conn}}) = (\im f, (\iota_K)_f)
\]
for any maximal self-local map $f$. Note that the maps $(f|_{\im f})^{-1} \colon \Cconn  \rightarrow C$ and $f \colon C \rightarrow \Cconn$ are local equivalences of $\iota_K$-complexes.
\end{definition}

We note an important observation that will be helpful later on: 

\begin{lemma}\label{lem:2.10}
Let $\cC = (C, \iota_K)$ be an $\iota_K$-complex. Any self-local map $h \colon \Cconn \rightarrow \Cconn$ is a chain isomorphism.
\end{lemma}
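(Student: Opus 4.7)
The plan is to exploit the defining maximality property of the self-local map $f \colon C \to C$ with $\Cconn = \im f$: I will lift any self-local map $h$ on $\Cconn$ to a self-local endomorphism of $C$, use maximality to force the kernels to agree, deduce that $h$ is injective, and finally upgrade injectivity to bijectivity via finite-dimensionality of each bigraded summand.

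Concretely, let $\pi \colon C \to \Cconn$ and $j \colon \Cconn \to C$ be the local equivalences of $\iota_K$-complexes from Definition~\ref{def:2.9}, chosen so that $j \circ \pi = f$ on the nose. Form the composition $F = j \circ h \circ \pi \colon C \to C$. Each factor is an $\F[\cU, \cV]$-equivariant, grading-preserving chain map that is local and homotopy commutes with $\iota_K$, so $F$ is itself a self-local map of $C$. The containment $\ker f \subseteq \ker F$ is immediate: if $f(x) = 0$ then $\pi(x) = 0$ and hence $F(x) = 0$. By maximality of $f$ in the pre-order $\lesssim$, this containment must be an equality, so $\ker F = \ker f$.

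This forces $h$ to be injective. Indeed, any $y \in \Cconn$ can be written as $y = \pi(x)$ for some $x \in C$, and $h(y) = 0$ implies $F(x) = j(h(y)) = 0$, whence $x \in \ker f$ and therefore $y = \pi(x) = 0$. To promote injectivity to bijectivity, I will use that $\Cconn$ is a free, finitely-generated bigraded module over $\F[\cU, \cV]$ with $\deg(\cU) = (-2, 0)$ and $\deg(\cV) = (0, -2)$; consequently each bigraded summand $\Cconn_{(a,b)}$ is finite-dimensional over $\F$. Because $h$ preserves the bigrading, it restricts to an injective $\F$-linear self-map of each such finite-dimensional summand, hence is a bijection there and thus a bijection on $\Cconn$. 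A short diagram chase then shows that the set-theoretic inverse of $h$ is automatically a chain map, $\F[\cU, \cV]$-equivariant, and homotopy commutes with $\iota_K$, so $h$ is a chain isomorphism of $\iota_K$-complexes.

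The main obstacle I anticipate is justifying the literal kernel comparison $\ker f \subseteq \ker F$, since this requires honest chain-level projection and inclusion maps $\pi, j$ satisfying $j \circ \pi = f$ exactly (not merely up to homotopy), together with the verification that these are genuine morphisms of $\iota_K$-complexes. This is precisely what the injectivity result of \cite[Lemma 3.4]{HHL} cited in the excerpt affords when $f$ is maximal: it guarantees that $\im f$ is a bona fide submodule of $C$ on which the natural projection and inclusion are well-defined, so that one can speak of actual (rather than homotopical) kernels. Once these representatives are fixed, the maximality argument runs without further complication.
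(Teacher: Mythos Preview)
Your argument is correct and follows essentially the same route as the paper's: both lift $h$ to the self-local endomorphism $(f|_{\im f})^{-1}\circ h\circ f$ of $C$ and use maximality of $f$ to force $\ker h=0$, with your write-up additionally spelling out the finite-dimensionality step from injectivity to bijectivity. One small correction: with $\pi=f$ and $j=(f|_{\im f})^{-1}$ as in Definition~\ref{def:2.9}, the composite $j\circ\pi$ is $(f|_{\im f})^{-1}\circ f$, which is not equal to $f$ in general; fortunately you never actually use $j\circ\pi=f$, only that $\ker\pi=\ker f$, that $\pi$ is surjective, and that $j$ is injective, all of which hold on the nose.
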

\begin{proof}
Let $\Cconn = \im f$ for some maximal self-local map $f$ of $C$. Then $(f|_{\im f})^{-1} \circ h \circ f$ is a self-local map of $C$. If $\ker h \neq 0$, then this would have kernel a strict superset of $\ker f$, violating the maximality of $f$.
\end{proof}

\begin{definition}\label{def:2.11}
Let $\cC = (C, \iota_K)$ be a $\iota_K$-complex. We say $\cC$ is $\Ds$\textit{-nontrivial} if
\[
s \colon \Cconn \rightarrow \Cconn
\]
satisfies $s \not\simeq \mathrm{id}$. (Here, we view $\Cconn$ as an $\iota_K$-complex in its own right and define $s=\mathrm{id} + \Phi \Psi$ as in Section~\ref{sec:2.4}.)
\end{definition}

Although in general the $\iota_K$-connected complex of $K$ is difficult to compute, there are many classes of knots for which $\Cconn$ is understood. For instance, for Floer-thin knots, this computation is essentially contained in \cite[Propsition 8.1]{HM}. Once $\cC_\mathrm{conn}$ is determined, it is straightforward to calculate $s$ and decide whether $K$ is $\Ds$-nontrivial.

\subsection{Numerical invariants}\label{sec:2.6}

In this section, we define a numerical invariant which completely captures the existence of local maps from the trivial complex. Let $\cC = (C, \phi, \iota_K)$ be a $(\phi, \iota_K)$-complex. Recall that $A_0(\cC)$ is the subcomplex of $C$ spanned by all elements $x$ with $\gr_{\scU}(x) = \gr_{\scV}(x)$. This may be viewed as a singly-graded complex over the ring $\F[U]$, with the grading given by $\gr_{\scU} = \gr_{\scV}$ and $U = \scU \scV$. We denote the result by $A_0(\cC) = (A_0(C), \phi, \iota_K)$; it is immediate that $A_0(\cC)$ is a $(\phi, \iota)$-complex in the sense of Definition~\ref{def:2.3}. 

We may also define a chain complex $\Cyl(\cC)$, given by the total complex of the following diagram:

\begin{comment}
\[
\Cyl(\cC) = 
\begin{tikzcd}
	{A_0(C)} & {A_0(C)[-1]} \\
	{A_0(C)[-1]}
	\arrow["{1 + \iota_K}", from=1-1, to=1-2]
	\arrow["{1 + \phi}", from=1-1, to=2-1]
\end{tikzcd}
\]
\end{comment}

\begin{center}
\begin{picture}(200,50)
\put(0,20){$L_{\phi,\iota_K}(\mathcal{C}) = $}
\put(65,40){$A_0(C)$}
\put(130,40){$A_0(C)[-1]$}
\put(55,0){$A_0(C)[-1]$}
\put(100,40){$\xrightarrow{1+\iota_K}$}
\put(75,30){\rotatebox{270}{$\xrightarrow{\text{\rotatebox{90}{$1+\phi$}}}$}}
\end{picture}
\end{center}
For our purposes, it is helpful to have the natural map
\[
q\colon \Cyl(\cC) \to A_0(C)
\]
given by projecting onto the unshifted (i.e., top-left corner) copy of $A_0(C)$.

\begin{definition}
Let $\cC = (C, \phi, \iota_K)$ be a $(\phi, \iota_K)$-complex. We define $\delta(\cC)\in \Z$ to be 
\[
\delta(\cC)=-\frac{1}{2}\max\{ \gr(x): x\in H_*(\Cyl(\cC)) \text{ and }  q_*(x) \text{ is }  \bF[U]\text{-nontorsion}\}.
\]
Here, we will consider only complexes $(C, \phi, \iota_K)$ such that $C/\scU$ and $C/\scV$ are both homotopy equivalent to $\bF[\scV]$ and $\bF[\scU]$, where $1$ is given degree zero. We say that such complexes are of \emph{$S^3$-type}. It is easily checked that for complexes of $S^3$-type, $\delta(\cC) \geq 0$.
\end{definition}

The following lemma shows that $\delta$ completely characterizes the existence of local maps from the trivial complex into $A_0(\cC)$:

\begin{lemma}\label{lem:d=0->implies-local-map}
Let $\cC = (C, \phi, \iota_K)$ be a $(\phi, \iota_K)$-complex of $S^3$-type. Then $\delta(\cC)=0$ if and only if there is a local map from $(\bF[U], \id, \id)$ to $A_0(\cC)$.
\end{lemma}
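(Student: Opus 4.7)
My plan is to establish a close correspondence (up to homotopy) between local maps $f \colon (\bF[U], \id, \id) \to A_0(\cC)$ and cycles $(x, y_1, y_2) \in \Cyl(\cC)$ of total grading $0$ whose $q$-projection is $\bF[U]$-nontorsion in $H_*(A_0(C))$. With this dictionary in hand, the lemma will fall out of the definition of $\delta(\cC)$ together with the already asserted inequality $\delta(\cC) \ge 0$ for complexes of $S^3$-type.

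To unpack the correspondence, I exploit $\bF[U]$-equivariance: a grading-preserving chain map $f \colon \bF[U] \to A_0(C)$ is determined by $x := f(1) \in A_0(C)_0$, which must be a cycle, and the chain homotopies $H_\phi, H_\iota \colon \bF[U] \to A_0(C)$ of internal degree $+1$ witnessing $\phi \circ f \simeq f$ and $\iota_K \circ f \simeq f$ (which are the $(\phi, \iota)$-morphism conditions against the source $(\bF[U],\id,\id)$) are likewise determined by $y_2 := H_\phi(1)$ and $y_1 := H_\iota(1)$, both lying in $A_0(C)_1$. Evaluating the chain-homotopy identities at $1 \in \bF[U]$ reduces them to $\partial y_1 = (1+\iota_K)(x)$ and $\partial y_2 = (1+\phi)(x)$. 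Using the convention $(A[-1])_d = A_{d+1}$ implicit in the definition of $\Cyl(\cC)$, these are exactly the conditions for $(x, y_1, y_2)$ to be a cycle at total grading $0$ in $\Cyl(\cC)$. Finally, the localness of $f$ is equivalent to $[x]$ being $\bF[U]$-nontorsion in $H_*(A_0(C))$, matching the condition on $q_*$ in the definition of $\delta$.

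Given this dictionary, both directions follow quickly. For the forward implication (local map $\Rightarrow \delta = 0$), a local map produces a class in $H_*(\Cyl(\cC))$ of grading $0$ with nontorsion $q$-image, forcing $\delta(\cC) \le 0$; together with $\delta(\cC) \ge 0$ this gives $\delta(\cC) = 0$. For the reverse (assuming $\delta(\cC) = 0$), finite generation of $\Cyl(\cC)$ ensures the maximum in the definition of $\delta$ is actually attained at $0$, so I may pick a representative cycle $(x, y_1, y_2)$ of grading $0$ with $[x]$ nontorsion and extend $\bF[U]$-equivariantly by $f(U^k) := U^k x$, $H_\iota(U^k) := U^k y_1$, $H_\phi(U^k) := U^k y_2$ to obtain the desired local map. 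The only subtle bookkeeping I expect is ensuring that the $[-1]$ grading shift in $\Cyl(\cC)$ pairs correctly with the convention that chain homotopies between grading-preserving maps have internal degree $+1$, so that a triple with $x$ of internal degree $0$ and $y_1, y_2$ of internal degree $1$ genuinely sits at total grading $0$ in $\Cyl(\cC)$; beyond that, the argument is a direct translation between the two formulations.
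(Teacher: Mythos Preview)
Your proposal is correct and follows essentially the same approach as the paper: both arguments unpack a cycle in $\Cyl(\cC)$ as a triple $(x,y,z)$ with $\partial x=0$, $\partial y=(1+\phi)(x)$, $\partial z=(1+\iota_K)(x)$, and then translate this data into a local map $F(1)=x$ together with the two homotopies witnessing commutation with $\phi$ and $\iota_K$. Your write-up is slightly more explicit about the grading bookkeeping and the role of the inequality $\delta(\cC)\ge 0$ in the forward direction, but the underlying argument is the same.
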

\begin{proof} Note that a cycle in $\Cyl(\cC)$ consists of a triple $(x,y,z)$ such that
\[
\d x=0 \quad \d y=(1+\phi)(x)\quad \text{and} \quad \d z=(1+\iota_K)(x)
\]
where $x,y,z\in A_0(C)$. We observe that $q_*(x,y,z)=x$, so $\delta(\cC)=0$ if and only if there is a cycle $(x,y,z)$ in $\Cyl(\cC)$ such that $[x]$ is $U$-nontorsion in $H_*(A_0(C))$ and $\gr(x)= 0$. We may assume also that $\gr(y)=\gr(z)=1$ (here we think of $y$ and $z$ as elements of $A_0(\mathcal{C})$ as opposed to $\Cyl$). If $\delta(\cC) = 0$, we can thus define a local map $F$ from $(\F[U], \id, \id)$ to $(C, \phi, \iota_K)$ by setting $F(1)=x$. Note that 
\[
F\circ \id+\iota_K\circ F=[\d, h]\quad \text{and} \quad F\circ \id+\phi\circ F=[\d, j]
\]
where $h(1)=z$ and $j(1)=y$.  Similarly, given such a local map, we may construct such a cycle $(x,y,z)$, completing the proof. 
\end{proof}

In Lemma~\ref{lem:fromA_0toC} below, we show that there is a local map from $(\bF[U], \id, \id)$ to $A_0(\cC)$ (in the sense of Definition~\ref{def:2.4}) if and only if there is a local map from $(\F[\cU, \cV], \id, \iota_0)$ to $\cC$ (in the sense of Definition~\ref{def:2.8}). Here $\iota_0$ is the unique skew-graded, skew $\F{[\cU,\cV]}$-equivariant self-map of $\F[\cU, \cV]$. Hence $\delta$ characterizes local maps from the trivial complex in both the knot Floer and the large surgery settings.

\begin{comment}
\begin{lemma}\label{lem:d=0->implies-local-map}
 Let $(C,\iota_K,\phi)$ be a knot-like complex of $S^3$-type. If $\delta(C,\iota_K,\phi)=0$ if and only if there is a local map from $(\bF[\scU,\scV],\iota_0,\id)$ to $(C,\iota_K,\phi).$
\end{lemma}
\begin{proof} Note that a cycle in $A_0(C,\iota_K,\phi)$ consists of a triple $(x,y,z)$ such that
\[
\d x=0 \quad \d y=(1+\phi)(x)\quad \text{and} \quad \d z=(1+\iota_K)(x).
\]
Further, $x,y,z\in A_0(C)$. We observe that $q_*(x,y,z)=x$, so $\delta(C,\iota_K,\phi)=0$ if and only if there is such a triple $(x,y,z)$ which is a cycle in $A_0(C,\iota_K,\phi)$ such that $[x]$ is non-torsion in $H_*(A_0(C))$ and $\gr(x)= 0$. We may assume also that $\gr(y)=\gr(z)=1$. We can define a local map $F$ from $(\bF[\scU,\scV], \iota_0, \id)$ to $(C,\iota_K, \phi)$ by setting $F(1)=x$. Note that 
\[
F\circ \iota_0+\iota_K\circ F=[\d, h]\quad \text{and} \quad F\circ \id+\phi\circ F=[\d, j]
\]
where $h(1)=z$ and $j(1)=y$.  Similarly, given such a local map, we may construct such a cycle $(x,y,z)$, completing the proof. 
\end{proof}
\end{comment}

We make the definition:

\begin{definition}
Let $K$ be a knot in $S^3$ and $\phi$ be a relative diffeomorphism of $(S^3, K)$. Define
\[
\delta(K,\phi)=\delta(\CFK(K), \phi, \iota_K).
\]
\end{definition}

%%%%%%%%%%%%%%%%%%%%%%%%%%%%%%%%%%%%%%%%%%%%%%%%%%%%%%%%%%%%%%%%%%%%%%%%%%%%%%%%%%%%%%%%%%%%%%%%%%%%%%%%%%%%%%%%%%%%%%%%%%%%%%%%%%%%%%%%%%%%%%%%%%%%%%%%%%%%%%%%%%%%%%%%%%%%%%%%%%%%%%%%%%%%%%%%%%%%%%%%%%%%%%%%%%%%%%%%%%%%%%%%%%%%%%%%%%%%%%%%%%%%%%%%%%%%%%%%%%%%%%%%%%%%%%%%%%%%%%%%%%%%%%%%%%%%%%%%%%%%%%%%%%%%%%%%%%%%%%%%%%%%%%%%%%%%%%%%%%%%%%%%%%%%%%%%

\section{The General Obstruction}\label{sec:3}
We begin with a general cork detection result in the setting where $\phi$ is a relative diffeomorphism of $(S^3, K)$. The main claim of this section is the following:

\begin{theorem}\label{thm:3.1}
Let $\phi$ be a relative diffeomorphism of $(S^3, K)$. Suppose there is no local map
\[
(\F[U], \id, \id) \rightarrow (A_0(K), \phi, \iota_K).
\]
Then $(S^3_{1/m}(K), \phi)$ is a strong cork for any $m$ positive and odd.\footnote{Here, we implicitly suppose that $\smash{S^3_{1/m}(K)}$ bounds a contractible manifold, so that the non-extendability of $\phi$ is interesting.}
\end{theorem}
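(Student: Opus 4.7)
I will argue the contrapositive. Suppose $(S^3_{1/m}(K), \phi)$ is not a strong cork, so $\phi$ extends to a self-diffeomorphism of some homology ball $W$ with $\partial W = S^3_{1/m}(K)$. The goal is to construct a local map $(\F[U], \id, \id) \to (A_0(K), \phi, \iota_K)$ of $(\phi,\iota)$-complexes, contradicting the hypothesis.

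My first step is to extend $W$ to a $\phi$-equivariant negative-definite cobordism terminating at a large positive integer surgery on $K$. The manifold $S^3_{1/m}(K)$ admits a surgery description as $0$-surgery on $K$ together with a $(-m)$-framed unknotted meridian $\mu$ of $K$. Attaching $m-1$ many $(-1)$-framed $2$-handles along parallel meridians of $\mu$ reduces the framing on $\mu$ to $-1$ by iterated slam-dunks, and a further slam-dunk into $K$ produces $S^3_1(K)$. Attaching $N-1$ additional $(-1)$-framed meridional $2$-handles to $K$ then raises the surgery coefficient on $K$ to any desired large $N$. All of these attaching circles lie in a collar of $\partial \nu(K)$ on which $\phi$ acts as the identity, so $\phi$ extends over the entire handle attachment. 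Since $W$ is a homology ball and every new $2$-handle is $(-1)$-framed, removing a small ball from $W$ and appending these handles yields a $\phi$-equivariant negative-definite cobordism $W''$ from $S^3$ to $S^3_N(K)$ with intersection form $-I_{m+N-2}$ and $b_1=0$.

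My second step is to choose $N$ large and odd, so that $[0]$ is the unique self-conjugate $\spinc$-structure on $S^3_N(K)$, and to select a self-conjugate $\spinc$-structure $\s$ on $W''$ restricting correctly to both ends. Applying Lemma~\ref{lem:2.5} then produces a local map of $(\phi,\iota)$-complexes (up to an overall grading shift)
\[
(\F[U], \id, \id) \cong (\CFm(S^3), \id, \iota) \longrightarrow (\CFm(S^3_N(K), [0]), \phi, \iota).
\]
Third, for $N>2g(K)$ the large-surgery formula together with its involutive~\cite{HM} and $\phi$-equivariant refinements identifies $(\CFm(S^3_N(K), [0]), \phi, \iota)$ with $(A_0(K), \phi, \iota_K)$ as $(\phi,\iota)$-complexes; this uses that the natural cobordism implementing the large-surgery isomorphism is supported in a neighborhood of $K$ on which $\phi$ acts trivially. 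Composing with the map from the previous step produces the required local map to $(A_0(K), \phi, \iota_K)$ and closes the contradiction.

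The main obstacle is the $\spinc$-theoretic bookkeeping underlying the second step: one must verify that a self-conjugate $\spinc$-structure $\s$ on $W''$ exists with the prescribed boundary restrictions and with the characteristic equality $c_1(\s)^2 = -b_2(W'')$ needed for $F_{W'',\s}$ to be degree-preserving (up to a single overall shift). This parity computation, along with ensuring that the chain of $(-1)$-framed $2$-handles can be chosen so the target $N$ is odd, is precisely where the hypothesis that $m$ is positive and odd enters.
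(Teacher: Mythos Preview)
Your overall strategy matches the paper's: puncture the putative equivariant homology ball, extend by a $\phi$-equivariant negative-definite cobordism to a large odd surgery $S^3_N(K)$, apply Lemma~\ref{lem:2.5}, and then invoke the equivariant large-surgery identification with $(A_0(K),\phi,\iota_K)$. For $m=1$ your construction is essentially the paper's cobordism $W_1$ and works as written.

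For $m>1$, however, there is a genuine gap at exactly the point you flag, and it cannot be repaired within your construction. Your cobordism from $S^3_{1/m}(K)$ to $S^3_{1}(K)$, built from $m-1$ parallel $(-1)$-framed meridians of $\mu$, does \emph{not} sit inside a diagonal form $-I_{m+N-2}$. Writing the ambient $2$-handlebody on $K$ (framing $0$), $\mu$ (framing $-m$), $\mu_1,\dots,\mu_{m-1}$ (framing $-1$), the second homology of this piece of the cobordism is the orthogonal complement of $\langle e_K,e_\mu\rangle$, with basis $f_i=e_{\mu_i}-e_K$; since $e_K^2=0$ one finds $f_i\cdot f_j=-\delta_{ij}$. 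Thus the form on $W''$ is odd once $m\ge 2$. For any closed surface $\Sigma$ representing $f_i$ one has $\langle c_1(\s),[\Sigma]\rangle\equiv [\Sigma]^2\equiv 1\pmod 2$, whereas a self-conjugate $\s$ would force $2c_1(\s)=0$ and hence $\langle c_1(\s),[\Sigma]\rangle=0$. So $W''$ carries \emph{no} self-conjugate $\spinc$-structure, and Lemma~\ref{lem:2.5} cannot deliver the required $\iota$-equivariance. Summing over a conjugate pair $F_{W'',\s}+F_{W'',\bar\s}$ does not help either: the two maps have identical degree shift and cancel over $\F$, so the sum is not local. (Your claim that the $S^3_{+1}(K)\to S^3_N(K)$ block is also $-I_{N-1}$ is separately incorrect---the paper computes diagonal entries $-2$ there---but that block \emph{is} even, so it is not the obstruction.)

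The paper avoids this by not going directly from $S^3_{1/m}(K)$ to $S^3_{+1}(K)$. Instead it attaches a \emph{single} $2$-handle to obtain a cobordism $W_2\colon S^3_{1/m}(K)\to S^3_{+1}(K)\#L(-(m{-}1),1)$ whose unique second-homology class has square $-(m{-}1)$; this is even precisely when $m$ is odd, so $W_2$ is spin and carries a unique self-conjugate $\spinc$-structure (whose restriction to the lens space is pinned down by a short characteristic-class argument). The extraneous lens-space summand is then removed via the K\"unneth formula for both $\iota$ and $\phi$. This detour, not a parity count on a diagonal form, is where the hypothesis that $m$ be odd genuinely enters.
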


Note that this immediately gives the proof of Theorem~\ref{thm:numerical-intro}:

\begin{proof}[Proof of Theorem~\ref{thm:numerical-intro}]
Follows immediately from Theorem~\ref{thm:3.1} and Lemma~\ref{lem:d=0->implies-local-map}.
\end{proof}

We caution the reader that Theorems~\ref{thm:3.1} and \ref{thm:numerical-intro} have a restriction on the sign of $m$. To deal with negative $m$, note that $\smash{(S_{1/m}(K), \phi)}$ is a strong cork if and only if $\smash{(S_{-1/m}(-K), - \phi)}$ is a strong cork. The case of general $m$ may thus obtained by considering both $\delta(K, \phi)$ and $\delta(-K, -\phi)$. 

As discussed in Subsection~\ref{sec:2.3}, in order to show that $\smash{(S^3_{1/m}, \phi)}$ is a strong cork, it suffices to prove there is no local map
\[
(\F[U], \id, \id) \rightarrow (\CFm(S^3_{1/m}(K)), \phi, \iota).
\]
This is almost Theorem~\ref{thm:3.1}, but it is not quite the same. Indeed, Theorem~\ref{thm:3.1} essentially asserts that it suffices to prove there is no local map into \textit{large} surgery along $K$. We explain how to pass from large surgery to small surgery in Section~\ref{sec:3.2}; this uses a topological argument from \cite[Lemma 4.1]{dai20222}.

The advantage of using large surgeries is that the action of the surgered diffeomorphism $\phi$ is easily computed from the action of $\phi$ on $\CFK(K)$. As is well known, there is a large surgery isomorphism between $\CFm(S^3_n(K), [0])$ and the $A_0$-subcomplex of $\CFK(K)$ for $n \geq g_3(K)$. We verify that this intertwines the action of $\phi$ on the former with the action of $\phi$ on the latter. This is similar to the equivariant large surgery formula from \cite{mallick2022knot}, although due to the fact that $\phi$ is a general symmetry, the proof is not quite the same. 

\subsection{Large surgeries}\label{sec:3.1}
We begin by reviewing a particular formulation of the large surgery isomorphism. Let $K$ be any knot in $S^3$ and let $W_n(K)$ denote the 2-handle cobordism from $S^3$ to $S^3_{n}(K)$. Let $W'_n(K)$ denote the cobordism from $S^3_{n}(K)$ to $S^3$ obtained by turning $W_{n}(K)$ around and swicthing orientation. Puncturing the core of the $2$-handle in $W_n(K)$ gives a cobordism from the unknot $U \subset S^3_{n}(K)$ to $K \subset S^3$ inside $W'_{n}(K)$; see Figure~\ref{fig:3.1}. Denote this by $\Sigma_K$. Decorate $\Sigma_K$ with two arcs running from $U$ to $K$ which separate $\Sigma_K$ into $z$-basepointed and $w$-basepointed regions. Let $\mathcal{F}$ denote $\Sigma_K$ with this decoration; we also consider the conjugate decoration $\overline{\mathcal{F}}$ obtained by switching the $w$ and $z$-regions. In this case, the basepoints on $U$ and $K$ are also switched.

\begin{figure}[h!]
\center
\includegraphics[scale=0.9]{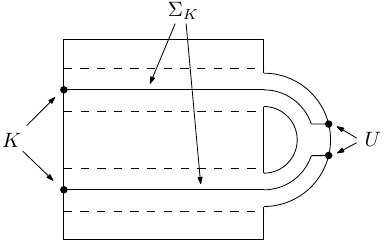}
\caption{The cobordism $W_n(K)$ obtained by attaching a $2$-handle to the outgoing end of $S^3 \times I$, together with the knot cobordism $\Sigma_K$ between $K$ and $U$. In the case that $K$ is equipped with a relative diffeomorphism $\phi$, the dotted lines denote $N(K) \times I$, where $N(K)$ is a neighborhood of $K$ fixed by $\phi$.}\label{fig:3.1}
\end{figure}

Let $\mathfrak{x}$ and $\mathfrak{y}$ be two $\spinc$-structures on $W'_{n}(K)$ such that
\[
\langle c_1(\mathfrak{x}), [\widehat{\Sigma}_K] \rangle = -n \quad \text{and} \quad \langle c_1(\mathfrak{y}), [\widehat{\Sigma}_K] \rangle = n,
\]
where $\widehat{\Sigma}_K$ represents the surface obtained by capping off $\Sigma_K$ by a Seifert surface for $K$ (and closing up the unknot on the other side). It follows that both $\mathfrak{y}$ and $\mathfrak{x}$ restrict to the $\spinc$-structure $[0] \in \Spinc(S^3_{n}(K))$. Note that $\mathfrak{x}$ and $\mathfrak{y}$ are conjugate to each other. 

For $n \geq g_3(K)$, the large surgery isomorphism is realized by the knot Floer cobordism map
\begin{equation}\label{eq:3.1}
F_{W,\mathcal{F},\mathfrak{x}}: \mathcal{CFK}(S^3_{n}(K),U) \rightarrow \mathcal{CFK}(S^3, K).
\end{equation}
By this, we mean the following: the map $F_{W,\mathcal{F},\mathfrak{x}}$ preserves the Alexander grading and hence restricts to a map from the $A_0$-complex of the left-hand side to the $A_0$-complex of the right-hand side. The former is tautologically identified with $\CFm(S^3_n(K), [0])$, while the latter is $A_0(K)$. In \cite[Section~4]{OSKnots} \cite{RasmussenThesis}, it is shown that this restriction is an isomorphism of $\F[U]$-complexes. See also \cite[Proposition~6.9]{HM}. Note that the surgered diffeomorphism $\phi$ of $S^3_n(K)$ fixes $U$ pointwise and hence induces a self-map of $\CFK(S^3_n(K), U)$, which we again denote by $\phi$.

\begin{lemma}\label{lem:3.2}
Let $\phi$ be a relative diffeomorphism of $(S^3, K)$. Then the map
\[
F_{W,\mathcal{F},\mathfrak{x}}: \mathcal{CFK}(S^3_{n}(K),U) \rightarrow \mathcal{CFK}(S^3, K)
\]
homotopy commutes with both $\phi$ and $\iota_K$.
\end{lemma}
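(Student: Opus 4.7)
The strategy is to promote the relative diffeomorphism $\phi$ of $(S^3,K)$ to a self-diffeomorphism $\Phi$ of the pair $(W'_n(K), \Sigma_K)$ which restricts to $\phi$ on $S^3$, to the surgered diffeomorphism $\phi$ on $S^3_n(K)$, and which preserves both the decoration $\mathcal{F}$ and the $\spinc$-structure $\mathfrak{x}$. Once this is achieved, the desired commutation up to homotopy will follow directly from naturality of the knot Floer cobordism maps, in the spirit of \cite[Proposition 4.10]{DHM}.

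For the extension $\Phi$: by hypothesis $\phi$ fixes a neighborhood $N(K)$ of $K$ pointwise, and we may arrange the $2$-handle of $W_n(K)$ to be attached along a framed curve lying in $N(K)$. Then $\phi\times\id$ on $S^3\times I$ glues to the identity on the $2$-handle to produce a self-diffeomorphism $\Phi$ of $W_n(K)$. Turning the cobordism around gives the desired $\Phi$ on $W'_n(K)$. Since the knot cobordism $\Sigma_K$ (the punctured core) sits entirely inside $N(K)\times I$ together with the $2$-handle, as indicated in Figure~\ref{fig:3.1}, it is fixed pointwise by $\Phi$; consequently $\Phi$ preserves the two arcs of $\mathcal{F}$, and on the boundary it restricts to $\phi$ on $S^3$ and to the surgered $\phi$ on $S^3_n(K)$ (which in turn fixes the cocore unknot $U$). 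Finally, $\Phi$ fixes $\mathfrak{x}$ since $\Phi_*$ is the identity on $H^2$ (being isotopic to the identity after forgetting the $\Sigma_K$ constraint).

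For commutation with $\phi$: given the equivariant decorated cobordism $(\Phi,\mathcal{F},\mathfrak{x})$, naturality of the knot Floer cobordism maps under decorated-link-cobordism self-diffeomorphisms yields
\[
\phi\circ F_{W,\mathcal{F},\mathfrak{x}}\simeq F_{W,\mathcal{F},\mathfrak{x}}\circ\phi,
\]
exactly as in the closed $3$-manifold setting of \cite[Theorem~A]{Zemkegraphcobord}. For commutation with $\iota_K$: recall that $\iota_K$ on $\mathcal{CFK}(S^3,K)$ is built from the basepoint swap on the doubly-pointed diagram combined with $\spinc$-conjugation, so that at the cobordism level one has $\iota_K\circ F_{W,\mathcal{F},\mathfrak{x}}\simeq F_{W,\overline{\mathcal{F}},\overline{\mathfrak{x}}}\circ\iota_K$. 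Since $\overline{\mathfrak{x}}=\mathfrak{y}$ is the other $\spinc$-structure restricting to the self-conjugate class $[0]$ on $S^3_n(K)$, and the cobordism $W'_n(K)$ admits an obvious $\Spinc$-conjugation-covering symmetry identifying the $\mathfrak{x}$- and $\mathfrak{y}$-maps, one obtains the desired homotopy commutation, exactly as in the large-surgery computation of \cite[Proposition~6.9]{HM}.

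\textbf{Main obstacle.} The topological construction of $\Phi$ and the formal naturality statements are straightforward; the delicate point is basepoint bookkeeping. A relative diffeomorphism need not fix a specified basepoint on $K$, so one must invoke the relaxation of the based mapping class group requirement afforded by \cite[Theorem~D]{Zemkegraphcobord} (compare \cite[Lemma~4.1]{DHM}), and verify that the induced action on $\mathcal{CFK}$ is independent of the auxiliary basepath chosen. Care is also required to identify the surgered-$\phi$ action on $\mathcal{CFK}(S^3_n(K),U)$ defined directly via a Heegaard diagram with the one obtained via the extension $\Phi$, which in turn uses that $\phi$ fixes the meridian-based surgery solid torus.
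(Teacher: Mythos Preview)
Your treatment of the $\phi$-commutation is correct and matches the paper's argument exactly: extend $\phi$ as $\phi\times\id$ on $S^3\times I$ together with the identity on the $2$-handle, observe that this fixes $\Sigma_K$ (hence $\mathcal{F}$) pointwise and acts trivially on $\spinc$-structures, and invoke diffeomorphism invariance of the link cobordism maps.

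The $\iota_K$-commutation, however, is not handled correctly. Your claimed intermediate relation $\iota_K\circ F_{W,\mathcal{F},\mathfrak{x}}\simeq F_{W,\overline{\mathcal{F}},\overline{\mathfrak{x}}}\circ\iota_U$ leaves you needing to identify $F_{W,\overline{\mathcal{F}},\mathfrak{y}}$ with $F_{W,\mathcal{F},\mathfrak{x}}$, and the ``obvious $\Spinc$-conjugation-covering symmetry'' you invoke does not exist: there is no orientation-preserving self-diffeomorphism of $W'_n(K)$ exchanging $\mathfrak{x}$ and $\mathfrak{y}$. The correct mechanism (used in the paper, following \cite[Theorem~1.3]{Zemkeconnected}) is the conjugation formula
\[
F_{W,\mathcal{F},\mathfrak{x}}\circ\iota_U \simeq \iota_K\circ F_{W,\mathcal{F},\,\overline{\mathfrak{x}}+\mathrm{PD}[\Sigma_K]},
\]
in which the decoration stays $\mathcal{F}$ and the $\spinc$-structure is shifted by $\mathrm{PD}[\Sigma_K]$. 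Since $\overline{\mathfrak{x}}=\mathfrak{y}$ and $\mathfrak{x}-\mathfrak{y}=\mathrm{PD}[\Sigma_K]$, the right-hand side is exactly $\iota_K\circ F_{W,\mathcal{F},\mathfrak{x}}$. Your citation of \cite{HM} points to the right place, but your sketch of the mechanism is wrong and should be replaced by this $\mathrm{PD}[\Sigma_K]$-shift argument.

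Finally, the ``main obstacle'' you identify is a non-issue here: a relative diffeomorphism by definition fixes a neighborhood $N(K)$ pointwise, so in particular it fixes the basepoints on $K$, and no appeal to \cite[Theorem~D]{Zemkegraphcobord} is needed for the knot Floer action of $\phi$.
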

\begin{proof}
We first re-phrase the proof of \cite[Theorem~1.5]{HM} to verify the commutation relation
\[
F_{W,\mathcal{F},\mathfrak{x}} \circ \iota_U \simeq \iota_K \circ F_{W,\mathcal{F},\mathfrak{x}}.
\]
It follows from \cite[Theorem 1.3]{Zemkeconnected} that
\[
F_{W,\mathcal{F},\mathfrak{x}} \circ \iota_U \simeq \iota_K \circ F_{W,\mathcal{F},\overline{\mathfrak{x}} + \mathrm{PD}[\Sigma_K]}.
\]
Note that $\mathfrak{x}$ and $\mathfrak{y}$ are defined from the $\spinc$-equivalence class with respect to the basepoints $w$ and $z$ respectively \cite{OSKnots}. In particular, we have $\mathfrak{x} - \mathfrak{y} = \mathrm{PD}[\Sigma_K]$, which proves the claim.

It remains to show that $F_{W,\mathcal{F},\mathfrak{x}}$ homotopy commutes with $\phi$. This is straightforward: note that $\phi$ extends over $W'_n(K)$ as $\phi \times \id$, together with the identity on the $2$-handle attachment. This extension fixes $\Sigma_K$ pointwise and is easily checked to act as the identity on the set of $\spinc$-structures on $W'_n(K)$. It follows that 
\[
\phi \circ F_{W,\mathcal{F},\mathfrak{x}} \simeq  F_{W,\mathcal{F},\mathfrak{x}}\circ \phi
\]
by the diffeomorphism invariance of the link cobordism maps; see \cite[Theorem~A]{zemke2019link} and \cite[Equation~(1.2)]{Zemkegraphcobord}.
\end{proof} 

This immediately gives:

\begin{lemma}\label{lem:3.3}
Let $\phi$ be a relative diffeomorphism of $(S^3, K)$. For $n \geq g_3(K)$, we have a homotopy equivalence
\[
(\CFm(S^3_n(K), [0]), \phi, \iota) \simeq (A_0(K), \phi, \iota_K).
\]
\end{lemma}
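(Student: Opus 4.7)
The plan is to combine the classical large surgery isomorphism with the intertwining relations from Lemma~\ref{lem:3.2}. First, I would invoke the results of Ozsv\'ath--Szab\'o \cite{OSKnots} and Rasmussen \cite{RasmussenThesis} which assert that for $n \geq g_3(K)$ the cobordism map $F_{W,\mathcal{F},\mathfrak{x}}$ from \eqref{eq:3.1} preserves the Alexander grading and restricts to an $\F[U]$-chain isomorphism between the $A_0$-subcomplex of $\CFK(S^3_n(K), U)$ and $A_0(K)$. The domain is tautologically identified with $\CFm(S^3_n(K), [0])$, giving an abstract $\F[U]$-module equivalence on the underlying complexes.

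Next, I would observe that both $\phi$ and $\iota_K$ preserve the Alexander bigrading on $\CFK$-complexes, so they restrict to self-maps of the $A_0$-subcomplexes on both sides. Lemma~\ref{lem:3.2} says that the full map $F_{W,\mathcal{F},\mathfrak{x}}$ homotopy commutes with both $\phi$ and $\iota_K$ on the ambient knot Floer complexes. Choosing (or modifying) the commuting homotopies to preserve the Alexander filtration (which can be arranged since the defining cobordism maps and the $\Phi,\Psi$-type maps respect the filtration structure), they restrict to the $A_0$-level. This produces a homotopy equivalence of $(\phi, \iota_K)$-complexes between the two $A_0$-subcomplexes, which via the first paragraph gives an equivalence between the $A_0$-subcomplex of $\CFK(S^3_n(K), U)$ and $A_0(K)$.

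The final task is to match the unknot-side maps with their $3$-manifold counterparts. The involution $\iota_U$ on the $A_0$-subcomplex of $\CFK(S^3_n(K), U)$ is identified with the standard Heegaard Floer involution $\iota$ on $\CFm(S^3_n(K), [0])$ essentially by \cite[Proposition~6.9]{HM}, applied with $U$ in place of a nontrivial knot. Likewise, because $\phi$ fixes a neighborhood of $U$ pointwise, the JTZ \cite{JTZ} knot-Floer action of $\phi$ on $\CFK(S^3_n(K), U)$ restricts on $A_0$ to the usual $3$-manifold mapping class group action of $\phi$ on $\CFm(S^3_n(K), [0])$; this follows from the naturality of the JTZ construction. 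Combining these identifications with the previous paragraph yields the desired equivalence of $(\phi, \iota)$-complexes. The main subtlety is ensuring that the homotopy-commutation relations of Lemma~\ref{lem:3.2}, stated on the full knot Floer complexes, restrict cleanly to the $A_0$-subcomplex; everything else is bookkeeping around the standard large surgery formula.
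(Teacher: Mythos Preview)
Your proposal is correct and follows essentially the same approach as the paper: invoke the large surgery isomorphism realized by $F_{W,\mathcal{F},\mathfrak{x}}$, use Lemma~\ref{lem:3.2} to carry the $\phi$ and $\iota_K$ actions across, and make the tautological identification of $A_0(\CFK(S^3_n(K),U))$ with $\CFm(S^3_n(K),[0])$ taking $\iota_U$ to $\iota$ and $\phi$ to $\phi$. The only addition is your explicit worry that the homotopies from Lemma~\ref{lem:3.2} restrict to the $A_0$-level; this is automatic, since in the bigraded $\bF[\scU,\scV]$-setting the maps and the homotopies witnessing the commutation are themselves (skew-)bigraded and hence preserve Alexander grading zero, so no modification is needed.
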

\begin{proof}
As stated previously,
\[
F_{W,\mathcal{F},\mathfrak{r}}: \mathcal{CFK}(S^3_{n}(K),U) \rightarrow \mathcal{CFK}(S^3, K)
\]
induces an isomorphism between the $A_0$-complex of the left-hand side and the $A_0$-complex of the right-hand side. The former is tautologically identified with $\CFm(S^3_n(K), [0])$; this identification takes $\iota_U$ to $\iota$ and the action of $\phi$ on $\mathcal{CFK}(S^3_{n}(K),U)$ to the action of $\phi$ on $\CFm(S^3_n(K), [0])$. Applying Lemma~\ref{lem:3.2} then gives the claim.
\end{proof}

\subsection{Small surgeries}\label{sec:3.2}
We now explain how to pass from large to small surgery. In what follows, our convention is that $L(m,1)$ is $m$-surgery on the unknot. 

\begin{lemma}\label{lem:3.4}
Let $K$ be any knot and $m$ and $n$ be any two positive integers.
\begin{enumerate}
\item There is a negative-definite cobordism $W_1$ from 
\[
S^3_{+1}(K) \quad \text{to} \quad S^3_{n}(K)
\]
with $b_1(W_1) = 0$. This cobordism is spin.
\item There is a negative-definite cobordism $W_2$ from 
\[
S^3_{1/(m+1)}(K) \quad \text{to} \quad S^3_{+1}(K)\# L(-m,1)
\]
with $b_1(W_2) = 0$. This cobordism is spin if and only if $m$ is even.
\end{enumerate}
\end{lemma}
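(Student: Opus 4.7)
The plan is to construct both cobordisms explicitly by Kirby calculus: in each case I start from a natural surgery diagram of the outgoing boundary and split off the sublink producing the incoming boundary, so that the remaining $2$-handles constitute the cobordism. The main subtlety will be the handle-slide step in part (2).

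For part (1), I would realize $S^3_n(K)$ as rational surgery on the dual knot $K^*\subset S^3_{+1}(K)$. Since $+1$-surgery makes $\mu_{K^*}=\mu+\lambda$ and $\lambda_{K^*}=\lambda$ on $\partial N(K)$, integer $k$-surgery on $K^*$ corresponds to $k/(k+1)$-surgery on $K$ in $S^3$, so $k=-n/(n-1)$ recovers $S^3_n(K)$. Since $-n/(n-1)$ has the negative continued fraction expansion $[-2,-2,\ldots,-2]$ of length $n-1$, I realize this rational surgery as a linear chain of $n-1$ $2$-handles, each framed $-2$, attached to $S^3_{+1}(K)\times I$ along $K^*$ and iterated meridians. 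The intersection form of $W_1$ is then the restriction to these handles, which is the negated $A_{n-1}$ Cartan matrix: negative-definite and even. Thus $b_1(W_1)=0$ (no $1$-handles) and $W_1$ is spin.

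For part (2), I would start from the split surgery diagram for $S^3_{+1}(K)\#L(-m,1)$, namely $K$ framed $+1$ together with a split unknot $U$ framed $-m$. First I slam-dunk $K$-framed-$(+1)$ into the two-component pair $(K\text{ framed }0,\mu\text{ framed }-1)$ with $\mu$ a meridian of $K$. Then I perform a single handle slide of $\mu$ over $U$; the standard handle-slide formulas give the new framing $-(m+1)$ and linkings $\mathrm{lk}(\mu',U)=-m$ and $\mathrm{lk}(\mu',K)=1$. The sublink $(K,\mu')$ with framings $(0,-(m+1))$, linked once, has trace with boundary $S^3_{1/(m+1)}(K)$ by slam dunk, and taking $W_2$ to be the remaining $-m$-framed $2$-handle $h_U$ produces a cobordism from $S^3_{1/(m+1)}(K)$ to $S^3_{+1}(K)\#L(-m,1)$. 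The intersection form of $W_2$ is the $1\times 1$ matrix $(-m)$, so $W_2$ is negative-definite with $b_1(W_2)=0$; since $H_1(W_2)=0$, spinness is equivalent to evenness of the form, which holds exactly when $m$ is even. The essential thing to check is that sliding $\mu$ over $U$ (rather than the reverse) leaves the attaching circle of $h_U$ intact, so that $W_2$ cleanly decomposes off as a single $2$-handle attached above $S^3_{1/(m+1)}(K)$ with framing unaffected by the lower handles (a short Mayer--Vietoris computation using $\mathrm{lk}(U,K)=0$ gives zero framing shift). Once this is confirmed, everything else reduces to standard slam-dunk and handle-slide identities.
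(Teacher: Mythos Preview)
Your argument is correct, but both parts take a different Kirby-calculus route from the paper.

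For part (1), the paper attaches $n-1$ parallel $(-1)$-framed meridians of $K$ to $S^3_{+1}(K)$ and computes the intersection form as the orthogonal complement of the $K$-column in the resulting linking matrix; this yields the form with $-2$ on the diagonal and $-1$ in \emph{every} off-diagonal entry. Your chain-of-$(-2)$'s construction via the continued fraction $-n/(n-1)=[-2,\ldots,-2]$ on the dual knot gives instead the negated $A_{n-1}$ Cartan matrix. Both are even negative-definite, so either works; your version has the advantage that the form is read off immediately without passing to an orthogonal complement.

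For part (2), the paper goes in the opposite direction: it attaches a single $(+1)$-framed $2$-handle to $S^3_{+1}(K)\#L(-m,1)$ (linking both $K$ and the $(-m)$-framed unknot) to reach $S^3_{1/(m+1)}(K)$, computes the generator of $H_2$ as the vector $(m,-m,-1)$ in the $3\times 3$ linking lattice with self-intersection $+m$, and then turns the cobordism around to obtain $W_2$. Your approach builds $W_2$ directly by reverse slam-dunk and a slide of $\mu$ over $U$. The point you flag as ``essential'' is indeed the crux: after the slide, $U$ links $\mu'$ nontrivially, so the $(-m)$ framing in $S^3$ need not a priori agree with the Seifert framing in $S^3_{1/(m+1)}(K)$. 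Your assertion that the shift vanishes is correct and follows from the standard formula: with $Q=\begin{pmatrix}0&1\\1&-(m+1)\end{pmatrix}$ for the sublink $(K,\mu')$ and linking vector $v=(0,-m)$ for $U$, one has $v^T Q^{-1} v = m^2 (Q^{-1})_{22} = 0$, precisely because $\mathrm{lk}(U,K)=0$ kills the other entries. It would strengthen the write-up to include this one-line computation rather than leave it as a parenthetical. The paper's turn-around approach sidesteps this issue entirely at the cost of working with the full $3\times 3$ lattice.
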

\begin{proof}
The cobordism $W_1$ is given by attaching $n-1$ meridional 2-handles to $S^3_{+1}(K)$, as displayed in Figure~\ref{fig:3.3}. 

\begin{figure}[h!]
\center
\includegraphics[scale=0.8]{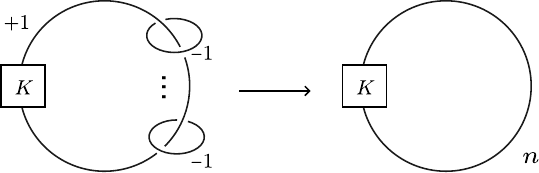}
\caption{A cobordism from $S^3_{+1}(K)$ to $S^3_n(K)$ given by attaching $n-1$ meridional $2$-handles along $(-1)$-framed meridians of $K$.}
\label{fig:3.3}
\end{figure}

\noindent
The linking form of the $n$-component link on the left is:
\vspace{0.1cm}
\[
\left(\begin{array}{ccccc}1 & 1 & 1 &  & 1 \\1 & -1 & 0 & \cdots & 0 \\1 & 0 & -1 &  & 0 \\ & \vdots &  & \ddots &  \\1 & 0 & 0 &  & -1\end{array}\right).
\vspace{0.2cm}
\]

\noindent
The second homology of this cobordism is given by the orthogonal complement of the first column, which has a basis given by $\{(1, -1, 0, \ldots, 0), (1, 0, -1, \ldots, 0), \ldots, (1, 0, 0, \ldots, -1)\}$. Each of these has self-intersection $-2$, while each pair of distinct basis elements has intersection $-1$. It follows that $W_1$ is negative-definite and spin. See \cite[Lemma 4.1]{dai20222}.

Figure~\ref{fig:3.4} displays a cobordism from $S^3_{+1}(K)\# L(-m,1)$ to $\smash{S^3_{1/(m+1)}(K)}$, obtained by attaching a single $2$-handle.
 
\begin{figure}[h!]
\center
\includegraphics[scale=0.8]{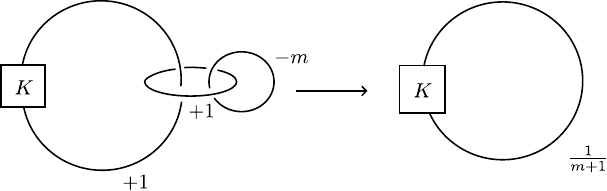}
\caption{A cobordism from $S^3_{+1}(K)\# L(-m,1)$ to $\smash{S^3_{1/(m+1)}(K)}$ given by attaching a $2$-handle along the $(+1)$-framed curve that links $K$.}
\label{fig:3.4}
\end{figure}

\noindent
To calculate the intersection form of this cobordism, observe that the linking form of the $3$-component link on the left is
\vspace{0.1cm}
\[
\begin{pmatrix}
1& 1& 0\\
1& 1& 1\\
0& 1&-m
\end{pmatrix}.
\vspace{0.2cm}
\]
\noindent
The second homology of this cobordism is given by the orthogonal complement of the first and third columns, which is spanned by $(m,-m,-1)$. This has self-intersection $m$. The cobordism $W_2$ is obtained by turning the cobordism of Figure~\ref{fig:3.4} around.
\end{proof}

Now suppose $\phi$ is a relative diffeomorphism of $(S^3, K)$. Then $W_1$ and $W_2$ are equivariant with respect to placing the surgered diffeomorphism $\phi$ on both ends, as can be seen by putting the handle attachment regions of Figures~\ref{fig:3.3} and \ref{fig:3.4} sufficiently close to $K$. Here, we define $\phi$ on $\smash{S^3_{+1}(K)\# L(-m,1)}$ by placing the connected sum point near $K$, so that $\phi$ extends to a self-diffeomorphism of $\smash{S^3_{+1}(K)\# L(-m,1)}$ which is the identity on the second summand. It is straightforward to check that in each case, the extension over the cobordism fixes the second homology and hence the set of $\spinc$-structures. This gives:

\begin{lemma}\label{lem:3.5}
Let $K$ be any knot and $m$ be positive and even. Fix any relative diffeomorphism $\phi$ of $(S^3, K)$. Then there are local maps
\[
F_1 \colon (\CFm(S^3_{+1}(K)), \phi, \iota) \rightarrow (A_0(K), \phi, \iota_K)
\]
and
\[
F_2 \colon (\CFm(S^3_{1/(m+1)}(K)), \phi, \iota) \rightarrow (\CFm(S^3_{+1}(K)), \phi, \iota)
\]
\end{lemma}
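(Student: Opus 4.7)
The plan is to construct each of $F_1$ and $F_2$ by applying Lemma~\ref{lem:2.5} to the cobordisms $W_1$ and $W_2$ produced in Lemma~\ref{lem:3.4}, and then post-composing with a Floer-theoretic identification of the target.

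For $F_1$, I would first fix an odd integer $n \ge g_3(K)$ and take $W_1$ from Lemma~\ref{lem:3.4}(1). Since $W_1$ is spin, negative-definite, and has $b_1 = 0$, it admits a spin $\spinc$-structure $\s_1$, which is automatically self-conjugate. As noted in the text preceding Lemma~\ref{lem:3.5}, the extension of $\phi$ across $W_1$ acts trivially on $H^2(W_1; \Z)$, so $\phi_*(\s_1) = \s_1$. The restriction $\s_1|_{S^3_{+1}(K)}$ is the unique self-conjugate $\spinc$-structure there, and $\s_1|_{S^3_n(K)}$ is self-conjugate; since $|H_1(S^3_n(K))| = n$ is odd, the unique such $\spinc$-structure is $[0]$. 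Lemma~\ref{lem:2.5} then provides a local map
\[
F_{W_1, \s_1} \colon (\CFm(S^3_{+1}(K)), \phi, \iota) \rightarrow (\CFm(S^3_n(K), [0]), \phi, \iota),
\]
and composition with the identification of Lemma~\ref{lem:3.3} gives the desired $F_1$.

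For $F_2$, I would use $W_2$ from Lemma~\ref{lem:3.4}(2), which is spin precisely because $m$ is even. Choose a spin $\spinc$-structure $\s_2$ on $W_2$; once again, the extended $\phi$ fixes the set of $\spinc$-structures on $W_2$, so $\s_2$ is $\phi$-invariant. Applying Lemma~\ref{lem:2.5} to $(W_2, \s_2)$ yields a local map
\[
F_{W_2, \s_2} \colon (\CFm(S^3_{1/(m+1)}(K)), \phi, \iota) \rightarrow (\CFm(S^3_{+1}(K) \# L(-m,1), \s_2|), \phi, \iota).
\]
The restriction $\s_2|$ is a connected-sum $\spinc$-structure of the form $\s_K \# \s_L$, where $\s_K$ is the unique self-conjugate $\spinc$-structure on $S^3_{+1}(K)$ and $\s_L$ is a spin $\spinc$-structure on $L(-m,1)$. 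To convert the codomain into $\CFm(S^3_{+1}(K))$, I would invoke the connected sum formula, which gives a chain homotopy equivalence
\[
\CFm(S^3_{+1}(K) \# L(-m,1), \s_K \# \s_L) \simeq \CFm(S^3_{+1}(K), \s_K) \otimes_{\bF[U]} \CFm(L(-m,1), \s_L).
\]
Since $L(-m,1)$ is an L-space and $\s_L$ is self-conjugate, the second factor is homotopy equivalent to $\bF[U]$, so this reduces to $\CFm(S^3_{+1}(K), \s_K)$ up to an overall grading shift. Composing with $F_{W_2, \s_2}$ produces $F_2$.

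The main thing to verify carefully is that the connected-sum equivalence respects the $(\phi, \iota)$-structure. Because $\phi$ is taken to be the identity on the $L(-m,1)$ summand (as specified right before Lemma~\ref{lem:3.5}), the $\phi$-action on the connected sum decomposes as $\phi \otimes \id$, and the $\id$-factor is absorbed into the $\bF[U]$-tensor identification. For the involution, the Hendricks--Manolescu--Zemke connected sum formula identifies $\iota$ with $\iota_1 \otimes \iota_2$ up to homotopy; the key point is that any grading-preserving $\bF[U]$-equivariant self-chain map of $\CFm(L(-m,1), \s_L) \simeq \bF[U]$ that induces an isomorphism on $U^{-1}$-localization is chain homotopic to the identity, so $\iota_2 \simeq \id$ and the tensor reduces to $\iota_1$ on $\CFm(S^3_{+1}(K), \s_K)$. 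Once this compatibility is in hand, the composition is a local map in the sense of Definition~\ref{def:2.4} (allowing a grading shift, as permitted), completing the construction of $F_2$.
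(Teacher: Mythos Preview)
Your approach is essentially the paper's: build $F_1$ from $W_1$ plus Lemma~\ref{lem:3.3}, and build $F_2$ from $W_2$ plus the K\"unneth identification for the $L(-m,1)$ summand, checking equivariance via Lemma~\ref{lem:2.5} and the connected-sum formula.

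There is one point where the paper is more careful and where your argument is incomplete. For $m$ even, $L(-m,1)$ has \emph{two} self-conjugate $\spinc$-structures, and they have different $d$-invariants; you do not determine which one $\s_2$ restricts to. The paper proves that the restriction is the structure $[m/2]$ (the one that extends over the disk-bundle filling $W_L$) by embedding $W_2$ into a larger even cobordism $W = A \cup B$ and using uniqueness of the self-conjugate $\spinc$-structure there. With this identification one has $d(L(-m,1),[m/2]) = -1/4$, which exactly cancels the $+1/4$ grading shift of $F_{W_2,\s_0}$, so that $F_2$ is grading-\emph{preserving}. Your $F_2$ is a priori only grading-homogeneous; that is enough for the literal statement of the lemma via the ``occasionally'' clause in Definition~\ref{def:2.4}, but it is not enough for the proof of Theorem~\ref{thm:3.1}, where one needs the composite $F_1 \circ F_2 \circ F_{W_0}$ to land in grading zero in order to contradict the hypothesis that no (grading-preserving) local map $(\F[U],\id,\id) \to (A_0(K),\phi,\iota_K)$ exists.

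A minor remark: for the $\iota$-compatibility under the K\"unneth equivalence you invoke $\iota_1 \otimes \iota_2$ and then argue $\iota_2 \simeq \id$ on $\F[U]$. That is fine, and the paper simply cites the Hendricks--Manolescu--Zemke formula directly to get $\iota \otimes \id$; the two come to the same thing here.
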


\begin{proof}
To define $F_1$, let $n \geq g_3(K)$ be odd and $W_1$ be the cobordism from Lemma~\ref{lem:3.4}. Denote the unique self-conjugate $\spinc$-structure on $W_1$ by $\s_0$. Then Lemma~\ref{lem:3.4} combined with Lemma~\ref{lem:2.5} gives a local map
\[
F_{W_1, \s_0} \colon (\CFm(S^3_{+1}(K)), \phi, \iota) \rightarrow (\CFm(S^3_{n}(K), [0]), \phi, \iota).
\]
of grading shift $(n-1)/4$. We now invoke the equivalence
\[
(\CFm(S^3_n(K), [0]), \phi, \iota) \simeq (A_0(K), \phi, \iota_K),
\]
of Lemma~\ref{lem:3.3}, which has grading shift $-(n-1)/4$, see \cite[Section 4]{OSKnots}. Postcomposing $F_{W_1, \s_0}$ with this identification gives the desired map $F_1$.

The map $F_2$ is slightly more subtle. Consider the cobordism $W_2$ constructed in Lemma~\ref{lem:3.4}. Denote the unique self-conjugate $\spinc$-structure on $W_2$ by $\s_0$. We claim that $\s_0$ restricts to the self-conjugate $\spinc$-structure on $L(-m,1)$ which corresponds to $[m/2]$. 

To see this, consider the cobordism $W$ from $S^3_{+1}(K)$ to $S_{1/(m+1)}(K)$ obtained by attaching a $(+1)$-framed $2$-handle along an unknot $U$ that links $K$ once and a $(-m)$-framed $2$-handle attached along another unknot $U'$ that links $U$ once, as in Figure~\ref{fig:3.4}. Let $A$ be the subcobordism from $S^3_{+1}(K)$ to $S^3_{+1}(K) \# L(-m, 1)$ obtained from the handle attachment along $U'$; this is just the cylinder $S^3_{+1}(K) \times I$ boundary sum the usual lens space cobordism $W_L$ from the empty set to $L(-m, 1)$. Let $B$ be the subcobordism from $S^3_{+1}(K) \# L(-m, 1)$ to $S^3_{1/(m+1)}(K)$ obtained from the handle attachment along $U$; this is just $-W_2$. Then $W = A \cup B$ and it is straightforward to check that $W$, $A$, and $B$ each have even intersection form (in the first two cases by sliding $U$ over $K$). Hence each has a unique self-conjugate $\spinc$-structure, and the unique self-conjugate $\spinc$-structure on $W$ moreover restricts to the unique self-conjugate $\spinc$-structures on $A$ and $B$, the latter of which is $s_0$. It follows that $s_0$ restricts to a self-conjugate $\spinc$-structure on $L(-m, 1)$ that extends over $W_L$. This is the characterizing property of $[m/2]$.

Lemma~\ref{lem:3.4} combined with Lemma~\ref{lem:2.5} now gives a local map
\[
F_{W_2, \s_0} \colon (\CFm(S^3_{1/m}(K)), \phi, \iota) \rightarrow (\CFm(S^3_{+1}(K) \# L(-m, 1), [m/2]), \phi, \iota).
\]
of grading shift $1/4$. By the usual connected sum formula,
\begin{equation}\label{eq:3.3}
\CFm(S^3_{+1}(K) \# L(-m, 1)) \simeq \CFm(S^3_{+1}(K)) \otimes \CFm(L(-m, 1)).
\end{equation}
As shown in \cite[Theorem 1.1]{HMZ}, \eqref{eq:3.3} intertwines the $\iota$-action on the left with the tensor product $\iota$-action $\iota \otimes \id$ on the right. It is also straightforward to see \eqref{eq:3.3} intertwines $\phi$ on the left with $\phi \otimes \id$ on the right. (See Lemma~\ref{lem:4.2} below.) Postcomposing $F_{W_2, \s_0}$ with \eqref{eq:3.3} and using the fact that $L(-m, 1)$ is an L-space with $d(L(-m, 1), [m/2]) = -1/4$ gives the desired map $F_2$.
\end{proof}

Everything is now in place to prove Theorem~\ref{thm:3.1}:

\begin{proof}[Proof of Theorem~\ref{thm:3.1}]
Let $m$ be positive and odd. Suppose that $\smash{(S^3_{1/m}(K), \phi)}$ bounded a homology ball $W_0$ with an extension of $\phi$. Then Lemma~\ref{lem:2.5} would give a local map
\[
F_{W_0} \colon (\F[U], \id, \id) \rightarrow (\CFm(S^3_{1/m}(K)), \phi, \iota).
\]
Postcomposing this with $F_1$ (if $m = 1$) or $F_1 \circ F_2$ (if $m > 1$) from Lemma~\ref{lem:3.5} then gives a local map from $(\F[U], \id, \id)$ to $(A_0(K), \phi, \iota_K)$. This contradicts the hypotheses of the theorem.
\end{proof}

%%%%%%%%%%%%%%%%%%%%%%%%%%%%%%%%%%%%%%%%%%%%%%%%%%%%%%%%%%%%%%%%%%%%%%%%%%%%%%%%%%%%%%%%%%%%%%%%%%%%%%%%%%%%%%%%%%%%%%%%%%%%%%%%%%%%%%%%%%%%%%%%%%%%%%%%%%%%%%%%%%%%%%%%%%%%%%%%%%%%%%%%%%%%%%%%%%%%%%%%%%%%%%%%%%%%%%%%%%%%%%%%%%%%%%%%%%%%%%%%%%%%%%%%%%%%%%%%%%%%%%%%%%%%%%%%%%%%%%%%%%%%%%%%%%%%%%%%%%%%%%%%%%%%%%%%%%%%%%%%%%%%%%%%%%%%%%%%%%%%%%%%%%%%%%%%

\section{Split Diffeomorphisms}\label{sec:4}

We now consider the case where $\phi = \phi_1 \# \phi_2$ is a split diffeomorphism. In this setting, we have the following algebraic re-interpretation of Theorem~\ref{thm:3.1}:

\begin{theorem}\label{thm:4.1}
Let $\phi = \phi_1 \# \phi_2$ be a split diffeomorphism of a slice knot $K = K_1 \# K_2$. Suppose that there is no local map
\[
(\CFK(K_2), \phi_2, \iota_{K_2})^\vee \rightarrow (\CFK(K_1), \phi_1, \iota_{K_1}).
\]
Then $(S^3_{1/m}(K), \phi)$ is a strong cork for any $m$ positive and odd.
\end{theorem}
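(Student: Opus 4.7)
The plan is to reduce Theorem~\ref{thm:4.1} to Theorem~\ref{thm:3.1} by combining the K\"unneth formula for the knot Floer complex of a connected sum with a tensor-Hom adjunction. By Theorem~\ref{thm:3.1}, it suffices to rule out a local map $(\F[U], \id, \id) \to (A_0(K), \phi, \iota_K)$; invoking Lemma~\ref{lem:fromA_0toC} (which identifies local maps from the trivial complex in the $(\phi,\iota)$-setting with those in the $(\phi,\iota_K)$-setting), this in turn reduces to ruling out a local map
\[
(\F[\cU, \cV], \id, \iota_0) \to (\CFK(K), \phi, \iota_K)
\]
of $(\phi, \iota_K)$-complexes.

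I would next apply the K\"unneth-type connected sum formula for the knot Floer complex \cite{Zemkeconnected}, promoted to the level of $(\phi, \iota_K)$-complexes, to produce a local equivalence
\[
\bigl(\CFK(K_1 \# K_2),\, \phi_1 \# \phi_2,\, \iota_{K_1 \# K_2}\bigr) \simeq \bigl(\CFK(K_1) \otimes \CFK(K_2),\, \phi_1 \otimes \phi_2,\, \iota_{K_1} \otimes \iota_{K_2}\bigr).
\]
The identification of $\phi_1 \# \phi_2$ with $\phi_1 \otimes \phi_2$ should follow from the diffeomorphism invariance of the link cobordism maps combined with the saddle-move description of the connected sum pairing: each factor diffeomorphism is supported in a ball disjoint from the saddle, making the tensor-product structure transparent. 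The analogous statement for $\iota_K$ is essentially the HMZ connected sum formula, modulo lower-order corrections that do not affect local equivalence.

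Finally, I would apply the standard tensor-Hom adjunction in the category of $(\phi, \iota_K)$-complexes: local maps $(\F[\cU,\cV], \id, \iota_0) \to \cC_1 \otimes \cC_2$ are in natural bijection with local maps $\cC_2^\vee \to \cC_1$. The contrapositive then shows that the nonexistence of a local map $(\CFK(K_2), \phi_2, \iota_{K_2})^\vee \to (\CFK(K_1), \phi_1, \iota_{K_1})$ precludes a local map $(\F[\cU,\cV], \id, \iota_0) \to \CFK(K)$, completing the argument. The main technical obstacle will be verifying the connected sum formula at the $(\phi, \iota_K)$ level, specifically the identification of $\phi_1 \# \phi_2$ with $\phi_1 \otimes \phi_2$ up to homotopy under the K\"unneth equivalence. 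A secondary point is formulating the duality $\vee$ correctly for $(\phi, \iota_K)$-complexes, which must appropriately account for the skew-equivariance of $\iota_K$.
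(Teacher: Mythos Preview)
Your outline matches the paper's proof essentially step for step: reduce via Theorem~\ref{thm:3.1} and Lemma~\ref{lem:fromA_0toC} to ruling out a local map $(\F[\cU,\cV],\id,\iota_0)\to\CFK(K)$, identify $\CFK(K_1\#K_2)$ with the tensor product (this is Lemma~\ref{lem:4.2}), and then use the group structure on local equivalence classes (your ``tensor-Hom adjunction'' is the paper's Lemma~\ref{lem:C_1<C_2}) to convert the trivial-into-tensor statement into the factor-to-factor statement.

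One technical point deserves correction. You write the $\iota_K$-action on the tensor product as $\iota_{K_1}\otimes\iota_{K_2}$ and dismiss the discrepancy as a ``lower-order correction that does not affect local equivalence.'' This is not quite right: the correct action is $\iota_{\otimes}=(\id\otimes\id+\Phi\otimes\Psi)\circ(\iota_{K_1}\otimes\iota_{K_2})$, and the $\Phi\otimes\Psi$ term is not optional --- without it the map does not square to the Sarkar map, so the naive $\iota_{K_1}\otimes\iota_{K_2}$ does not even define an $\iota_K$-complex in the sense of Definition~\ref{def:2.7}, and the question of whether the two are ``locally equivalent'' is not well-posed. The paper's Lemma~\ref{lem:4.2} records the correct formula, and the tensor product on the local equivalence group used in Lemma~\ref{lem:C_1<C_2} is defined with $\iota_{\otimes}$. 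Once you replace $\iota_{K_1}\otimes\iota_{K_2}$ by $\iota_{\otimes}$ throughout, your argument is correct and coincides with the paper's.
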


\subsection{Proof of Theorem~\ref{thm:4.1}}\label{sec:4.1} Let $\phi = \phi_1 \# \phi_2$ be a split diffeomorphism of $K_1 \# K_2$. Recall that we have a homotopy equivalence
\[
h:\mathcal{CFK}(K_1 \# K_2) \rightarrow \mathcal{CFK}(K_1)\otimes \mathcal{CFK}(K_2).
\]
This was first shown in \cite[Theorem 7.1]{OSKnots} and later re-interpreted in terms of an explicit pair-of-pants cobordism in \cite[Proposition 5.1]{Zemkeconnected}. We begin by computing the action of $\phi$ on $\CFK(K_1 \# K_2)$ under this identification.

\begin{lemma}\label{lem:4.2}
Let $\phi = \phi_1 \# \phi_2$ be a split diffeomorphism of $K = K_1 \# K_2$. Then we have a homotopy equivalence
\[
(\mathcal{CFK}(K_1 \# K_2), \phi_1 \# \phi_2, \iota_{K_1 \# K_2}) \simeq (\CFK(K_1) \otimes \CFK(K_2), \phi_1 \otimes \phi_2, \iota_\otimes),
\] 
where $\iota_{\otimes} = (\id \otimes \id + \Phi \otimes \Psi) \circ (\iota_{K_1} \otimes \iota_{K_2})$.
\end{lemma}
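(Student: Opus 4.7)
The plan is to split the claim into two independent pieces: first, that the action of $\phi_1\#\phi_2$ is intertwined with $\phi_1\otimes\phi_2$ under the connected sum equivalence $h$, and second, that $\iota_{K_1\#K_2}$ corresponds under $h$ to $\iota_\otimes=(\id\otimes\id+\Phi\otimes\Psi)\circ(\iota_{K_1}\otimes\iota_{K_2})$. The second piece is already known: it is precisely the connected sum formula for the involutive knot Floer involution, proved by Zemke (see the theorem identifying $\iota_{K_1\#K_2}$ in the connected sum paper, which also underlies \cite{HMZ}). I would simply cite this and focus my attention on establishing the diffeomorphism piece.

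For the diffeomorphism piece, the strategy is to realize the equivalence $h$ as a knot cobordism map associated with a pair-of-pants decorated surface $(P,\mathcal{F})$ from $(S^3,K_1)\sqcup(S^3,K_2)$ to $(S^3,K_1\#K_2)$ (this is the model from \cite[Proposition 5.1]{Zemkeconnected}). First, I would unpack the definition of the split diffeomorphism: by hypothesis, each $\phi_i$ fixes a ball $B_i\subset(S^3,K_i)$ pointwise, and the connected sum region used to construct $K_1\#K_2$ can be chosen to lie inside $B_1\cup B_2$. Consequently, the disjoint map $\phi_1\sqcup\phi_2$ agrees with $\phi_1\#\phi_2$ on the complement of this region, and they fit together across the saddle band to give a self-diffeomorphism $\widetilde{\phi}$ of the decorated cobordism $(P,\mathcal{F})$ which restricts to $\phi_1\sqcup\phi_2$ on the incoming end and to $\phi_1\#\phi_2$ on the outgoing end, and which fixes the decoration $\mathcal{F}$ and the basepoints setwise.

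I would then verify that $\widetilde\phi$ acts trivially on $H_2(P)$ and on the set of $\spinc$-structures on $P$; this is immediate since the pair-of-pants cobordism is built on the neighborhood of the connected sum band, which lies inside the fixed region. Applying the diffeomorphism invariance of the link cobordism maps \cite[Theorem A]{zemke2019link}, together with the fact that $\widetilde\phi$ restricts to identity near the decoration, gives
\[
F_{P,\mathcal{F}}\circ(\phi_1\otimes\phi_2)\simeq(\phi_1\#\phi_2)\circ F_{P,\mathcal{F}},
\]
which is precisely the desired intertwining under $h=F_{P,\mathcal{F}}$. Combining this with the already-cited involution formula gives the stated homotopy equivalence of $(\phi,\iota_K)$-complexes.

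The main obstacle I anticipate is purely bookkeeping: one must carefully choose the neighborhoods $B_i$, the connected sum band, and the decoration on $P$ so that $\widetilde\phi$ genuinely fixes all of this data pointwise in a neighborhood, allowing \cite[Theorem A]{zemke2019link} to apply cleanly. Unlike the involution case, no Sarkar-type correction $\Phi\otimes\Psi$ arises, because $\phi_1\#\phi_2$ extends over $P$ as an \emph{honest} self-diffeomorphism, whereas the connected sum formula for $\iota_K$ requires a basepoint-swap correction that is responsible for the $\Phi\otimes\Psi$ term in $\iota_\otimes$.
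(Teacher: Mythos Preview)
Your proposal is correct and follows essentially the same route as the paper: cite the known connected sum formula for $\iota_K$ from \cite{Zemkeconnected}, extend $\phi_1\#\phi_2$ over the pair-of-pants link cobordism (the paper uses the reverse fission direction, but this is immaterial), and invoke diffeomorphism invariance of the link cobordism maps from \cite[Theorem~A]{zemke2019link}. The only cosmetic difference is that the paper phrases the cobordism as a fission band plus $3$-handle from $K_1\#K_2$ to $K_1\sqcup K_2$, whereas you run it as a fusion pair-of-pants in the opposite direction; both realize the same homotopy equivalence $h$.
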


\begin{proof}
This was essentially shown in \cite[Theorem~5.1]{JuhaszZemkeSliceDisks}. The homotopy equivalence $h$ is given by the link cobordism map $F_{W,\cF}$, where $\cF$ is the cobordism built by attaching a fission band which splits $K_1 \# K_2$ into $K_1 \sqcup K_2$ and $W$ is built by attaching a 3-handle which splits $(S^3,K_1\sqcup K_2)$ into $(S^3, K_1)\sqcup (S^3,K_2)$. It was shown in \cite[Theorem 1.1]{Zemkeconnected} that $h$ intertwines $\iota_{K_1 \# K_2}$ and $\iota_\otimes$.

It is clear that both the fission band and the attaching sphere of the 3-handle can be chosen to be fixed by $\phi$. It is thus easily checked that $\phi$ extends over the cobordism $W$ in such a way that the extension fixes $\cF$ pointwise. On the outgoing component $(S^3, K_1)$ this extension acts as $\phi_1$, while on the outgoing component $(S^3, K_2)$ this extension acts as $\phi_2$. The theorem thus follows immediately from diffeomorphism invariance of the link cobordism maps; see \cite[Theorem~A]{zemke2019link} and \cite[Equation~(1.2)]{Zemkegraphcobord}. 
\end{proof}

We now turn to the proof of Theorem~\ref{thm:4.1}. We first have:

\begin{lemma}
\label{lem:C_1<C_2}
 Let $\cC_1$ and $\cC_2$ be two $(\phi,\iota_K)$-complexes. There is a local map from $(\bF[\scU,\scV], \mathrm{id}, \iota_0)$ to  $\cC_1\otimes \cC_2^\vee$ if and only if there is a local map from $\cC_2$ to $\cC_1$.
\end{lemma}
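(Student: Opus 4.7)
The plan is to exhibit a bijection, modulo chain homotopy, between morphisms of $(\phi,\iota_K)$-complexes $F_0 \colon (\bF[\scU,\scV], \mathrm{id}, \iota_0) \to \cC_1 \otimes \cC_2^\vee$ and $\bF[\scU,\scV]$-linear chain maps $F \colon \cC_2 \to \cC_1$ that homotopy commute with the $\phi$- and $\iota_K$-actions, in such a way that locality on the two sides corresponds. The main tool is the standard tensor--Hom adjunction: since $\cC_2$ is free and finitely generated, there is an isomorphism of bigraded chain complexes
\[
\cC_1 \otimes \cC_2^\vee \;\xrightarrow{\cong}\; \Hom_{\bF[\scU,\scV]}(\cC_2,\cC_1), \qquad x \otimes \alpha \longmapsto \bigl(y \mapsto \alpha(y)\,x\bigr).
\]
A morphism $F_0$ out of $(\bF[\scU,\scV], \mathrm{id}, \iota_0)$ is determined by the image of $1$, which is a bidegree $(0,0)$ cycle in $\cC_1 \otimes \cC_2^\vee$, and null-homotopies of $F_0$ correspond to boundaries; under the adjunction this cycle corresponds to a grading-preserving chain map $F \colon \cC_2 \to \cC_1$.

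Next, I would translate the equivariance conditions. With the standard definition of the dual structure maps, $\langle \phi_2^\vee \alpha, x\rangle = \langle \alpha, \phi_2^{-1} x\rangle$ and $\langle \iota_{K_2}^\vee \alpha, x\rangle = \langle \alpha, \iota_{K_2}^{-1} x\rangle$, the tensor actions $\phi_1 \otimes \phi_2^\vee$ and $\iota_{K_1} \otimes \iota_{K_2}^\vee$ on $\cC_1 \otimes \cC_2^\vee$ correspond under the adjunction to the conjugation actions $F \mapsto \phi_1 \circ F \circ \phi_2^{-1}$ and $F \mapsto \iota_{K_1} \circ F \circ \iota_{K_2}^{-1}$ on $\Hom(\cC_2,\cC_1)$. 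The homotopy commutation conditions required of $F_0$ with the source structure $(\mathrm{id}, \iota_0)$ thus translate precisely to $\phi_1 F \simeq F \phi_2$ and $\iota_{K_1} F \simeq F \iota_{K_2}$, i.e.\ to $F$ being a morphism of $(\phi, \iota_K)$-complexes. For the locality statement, since $(\scU,\scV)^{-1}\bF[\scU,\scV]$ is a field and each $(\scU,\scV)^{-1} H_*(\cC_i)$ is one-dimensional, a cycle in $\cC_1 \otimes \cC_2^\vee$ is nontorsion if and only if the corresponding chain map $F$ induces a nonzero (hence isomorphism) map on $(\scU,\scV)^{-1}H_*$. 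The two directions of the iff then follow: given a local map $F_0$, take $F$ to be the map adjoint to the image of $1$; conversely, given a local map $F$, define $F_0$ by sending $1$ to the cycle corresponding to $F$ under the adjunction.

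The main obstacle will be verifying that the involutive structure on $\cC_1 \otimes \cC_2^\vee$ is compatible with the adjunction. By analogy with Lemma~\ref{lem:4.2}, the natural involution on a tensor product of $\iota_K$-complexes carries a Sarkar-type correction of the form $\iota_\otimes = (\mathrm{id} + \Phi \otimes \Psi)\circ(\iota_{K_1} \otimes \iota_{K_2}^\vee)$ rather than being simply $\iota_{K_1} \otimes \iota_{K_2}^\vee$. I would check that this correction is absorbed once one imposes the condition that $F_0$ arise as a map from $\bF[\scU,\scV]$, using the relations $\iota_{K_i}^2 \simeq \mathrm{id} + \Phi_i \Psi_i$ on the two factors together with $\iota_0^2 = \mathrm{id}$ on the source. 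Once this bookkeeping is done, the lemma follows formally from the tensor--Hom adjunction.
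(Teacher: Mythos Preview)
Your approach via the tensor--Hom adjunction is sound in outline and yields strictly more than the paper proves: you are setting up a bijection between \emph{individual} morphisms $(\bF[\scU,\scV],\id,\iota_0)\to\cC_1\otimes\cC_2^\vee$ and morphisms $\cC_2\to\cC_1$, whereas the paper only needs the existence statement. The paper's argument is quite different and much shorter: it works entirely at the level of the group structure on local classes. Given a local map $\cC_2\to\cC_1$, tensor with $\id_{\cC_2^\vee}$ to obtain a local map $\cC_2\otimes\cC_2^\vee\to\cC_1\otimes\cC_2^\vee$, then precompose with the canonical local map $(\bF[\scU,\scV],\id,\iota_0)\to\cC_2\otimes\cC_2^\vee$ that exists for any $(\phi,\iota_K)$-complex; the converse direction tensors with $\id_{\cC_2}$ and postcomposes with $\cC_1\otimes\cC_2^\vee\otimes\cC_2\to\cC_1$.

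The practical difference is exactly the issue you flag as ``the main obstacle.'' In the paper's argument the Sarkar correction in $\iota_\otimes$ never needs to be unpacked: it is already absorbed into the standing facts that tensor product is well-defined on $(\phi,\iota_K)$-complexes and that $\cC\otimes\cC^\vee$ is locally trivial. Your route requires you to verify by hand that the adjunction carries $(\id+\Phi\otimes\Psi)\circ(\iota_{K_1}\otimes\iota_{K_2}^\vee)$-equivariance of $F_0(1)$ to the condition $\iota_{K_1}F\simeq F\iota_{K_2}$, and your final paragraph only gestures at this rather than doing it. That computation can be carried out, but as written it is a gap in the proposal rather than a completed step; if you want a clean self-contained proof along your lines you should either finish that check explicitly or, more efficiently, invoke the group structure as the paper does and bypass it.
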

\begin{proof} This follows immediately from the group structure on the set of local classes. If there is a local map from $\cC_2$ to $\cC_1$, then we can tensor with the identity map on $\cC_2^{\vee}$ to get a local map from $\cC_2\otimes \cC_2^{\vee}$ to $\cC_1\otimes \cC_2^{\vee}$. There is always a local map from $(\bF[\scU,\scV], \id, \iota_0)$ to $\cC_2\otimes \cC_2^{\vee}$, so composing we get a local map from $(\bF[\scU,\scV], \id, \iota_0)$ to $\cC_1\otimes \cC_2^{\vee}$. The converse is similarly straightforward to establish.
\end{proof}

The following is also useful for our purposes:
\begin{lemma}\label{lem:fromA_0toC} Let $\cC=(C,\phi, \iota_K)$ be a $(\phi,\iota_K)$-class. Then there is a local map from $(\bF[\scU,\scV],\mathrm{id}, \iota_0)$ to $\cC$ if and only if there is a local map from $(\bF[U], \id,\id)$ to $(A_0(C),\phi, \iota_K)$.
\end{lemma}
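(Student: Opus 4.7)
The plan is to match up local maps in the two settings by restriction to, and extension from, the $A_0$-subcomplex. In one direction everything comes for free from the identification $A_0(\bF[\scU,\scV])=\bF[U]$ together with the fact that $\iota_0$ restricts to the identity there; in the other direction, we extend the data from $\bF[U]$ to $\bF[\scU,\scV]$ using (skew-)equivariance of the various module structures.

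For the forward direction, suppose $F\colon (\bF[\scU,\scV],\id,\iota_0)\to\cC$ is local, with homotopies $\d j+j\d=F+\phi F$ and $\d h+h\d=F\iota_0+\iota_K F$. Since $F$ is grading-preserving, $x:=F(1)$ has bigrading $(0,0)$ and in particular lies in $A_0(C)$; locality of $F$ says $[x]$ is $(\scU,\scV)$-nontorsion in $H_*(C)$, which is equivalent to $[x]$ being $U$-nontorsion in $H_*(A_0(C))$. The homotopies $j$ and $h$ have bidegree $(1,1)$ (the second being skew-graded), so they send $A_0(\bF[\scU,\scV])=\bF[U]$ into $A_0(C)$. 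Because $\iota_0|_{\bF[U]}=\id$, restriction to $A_0$ then gives the desired local map $(\bF[U],\id,\id)\to (A_0(C),\phi,\iota_K)$.

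For the reverse direction, suppose $F'\colon (\bF[U],\id,\id)\to A_0(\cC)$ is local, with $x:=F'(1)$ in grading $0$ and homotopies $\d j'+j'\d=F'+\phi F'$, $\d h'+h'\d=F'+\iota_K F'$ in $A_0(C)$, where $y:=j'(1)$ and $z:=h'(1)$ sit in grading $1$. Extend $F'$ to $F\colon\bF[\scU,\scV]\to C$ by $\bF[\scU,\scV]$-equivariance, setting $F(\scU^i\scV^j)=\scU^i\scV^j x$; this is a grading-preserving chain map (since $\d x=0$) and is still local. Extend $j'$ equivariantly as well: $j(\scU^i\scV^j)=\scU^i\scV^j y$; a direct check using that $\d$, $\phi$, and $F$ are all $\bF[\scU,\scV]$-equivariant shows $\d j+j\d=F+\phi F$. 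To extend $h'$, use skew-equivariance instead: set $h(\scU^i\scV^j)=\scU^j\scV^i z$. Then both sides of the desired identity $\d h+h\d=F\iota_0+\iota_K F$ evaluated at $\scU^i\scV^j$ equal $\scU^j\scV^i(x+\iota_K(x))$, using that $\iota_0(\scU^i\scV^j)=\scU^j\scV^i$ and that $\iota_K$ is skew-equivariant.

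The only real subtlety is choosing the correct extension of the $\iota$-homotopy. An $\bF[\scU,\scV]$-equivariant extension would fail: the right-hand side $F\iota_0+\iota_K F$ sends $\scU^i\scV^j\mapsto \scU^j\scV^i(x+\iota_K(x))$, whereas an equivariant $h$ would produce $\scU^i\scV^j(x+\iota_K(x))$, which has the wrong bigrading. The skew-equivariant extension matches the skewness of both $\iota_0$ and $\iota_K$ and resolves this issue. With that bookkeeping in hand, both directions reduce to straightforward verifications of the chain-map and homotopy identities.
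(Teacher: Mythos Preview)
Your proof is correct and follows the same idea as the paper: both directions amount to observing that a local map from the trivial complex is determined by the triple $(x,y,z)$ with $\d x=0$, $\d y=x+\phi(x)$, $\d z=x+\iota_K(x)$, and this data lives in $A_0(C)$ either way. The paper compresses the reverse direction to ``this is the exact same data,'' whereas you spell out the $\bF[\scU,\scV]$-equivariant and skew-equivariant extensions explicitly; your remark on why the $\iota$-homotopy must be extended skew-equivariantly is a helpful clarification the paper leaves implicit.
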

\begin{proof} The ``only-if'' direction is obvious. Conversely, a local map from $(\bF[U], \id,\id)$ to $A_0(\cC)$ consists of an element $x\in A_0(C)$ such that $\iota_K(x)+x=\d(z)$ and $\phi(x)+x=\d(y)$ for some $z,y\in A_0(C)$. Since we can view $x$, $y$ and $z$ as also being elements of $\cC$, this is the exact same data as a local map from $(\bF[\scU,\scV],\id, \iota_0)$ to $(\cC,\phi, \iota_K)$. 
\end{proof}

As a consequence of Lemmas~\ref{lem:C_1<C_2} and~\ref{lem:fromA_0toC}, we immediately obtain the following:

\begin{corollary}\label{cor:local-equivalence-restatement} Let $\cC_1$ and $\cC_2$ be two $(\phi, \iota_K)$-complexes. There is a local map $(\bF[U], \id, \id) \rightarrow A_0(\cC_1 \otimes \cC_2^\vee)$ if and only if there is a local map from $\cC_2$ to $\cC_1$.
\end{corollary}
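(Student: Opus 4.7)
The plan is essentially to chain together the two lemmas immediately preceding the corollary, so the proof will be quite short.

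First I would apply Lemma~\ref{lem:fromA_0toC} with the input complex taken to be $\cC = \cC_1 \otimes \cC_2^\vee$. This gives an equivalence between the existence of a local map $(\bF[U], \id, \id) \to A_0(\cC_1 \otimes \cC_2^\vee)$ on the one hand, and the existence of a local map $(\bF[\scU,\scV], \id, \iota_0) \to \cC_1 \otimes \cC_2^\vee$ on the other. Before invoking this, I would check the minor point that $\cC_1 \otimes \cC_2^\vee$ is indeed a $(\phi, \iota_K)$-complex in the sense required (so that the lemma applies verbatim); this is routine since tensor product and dual of $(\phi, \iota_K)$-complexes are defined and behave as expected with respect to $\Phi, \Psi$, and the Sarkar map relation $\iota_K^2 \simeq s$.

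Next I would apply Lemma~\ref{lem:C_1<C_2}, which provides an equivalence between the existence of a local map $(\bF[\scU,\scV], \id, \iota_0) \to \cC_1 \otimes \cC_2^\vee$ and the existence of a local map $\cC_2 \to \cC_1$. Composing the two biconditionals yields the statement of the corollary.

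The only real obstacle, such as it is, is bookkeeping: making sure the two ``local map'' notions in the two lemmas really do refer to the same object (both of them phrased in terms of $(\bF[\scU,\scV],\id,\iota_0)$, with the same conventions on degree and on $U = \scU\scV$ used when passing to the $A_0$ subcomplex), so that the compositions of equivalences can be formed without any re-grading hidden shift. Once that is confirmed, the corollary is immediate from the two lemmas and deserves only a one-line proof of the form ``Combine Lemma~\ref{lem:fromA_0toC} (applied to $\cC_1 \otimes \cC_2^\vee$) with Lemma~\ref{lem:C_1<C_2}.''
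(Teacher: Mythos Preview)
Your proposal is correct and matches the paper's own proof, which simply reads ``Follows from Lemmas~\ref{lem:C_1<C_2} and~\ref{lem:fromA_0toC}.'' The bookkeeping concerns you raise are legitimate but routine, and the paper does not spell them out either.
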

\begin{proof}
Follows from Lemmas~\ref{lem:C_1<C_2} and~\ref{lem:fromA_0toC}.
\end{proof}

The proof of Theorem~\ref{thm:4.1} is now clear:

\begin{proof}[Proof of Theorem~\ref{thm:4.1}]
Let $m$ be positive and odd. Suppose that $\smash{(S^3_{1/m}(K_1 \# K_2), \phi_1 \# \phi_2)}$ bounded a homology ball $W_0$ with an extension of $\phi_1 \# \phi_2$. By Theorem~\ref{thm:3.1}, there is a local map
\[
(\F[U], \id, \id) \rightarrow (A_0(K_1 \# K_2), \phi_1 \# \phi_2, \iota_{K_1 \# K_2}).
\]
Setting $\cC_1 = (\CFK(K_1), \phi_1, \iota_{K_1})$ and $\cC_2 = (\CFK(K_2), \phi_2, \iota_{K_2})$, Corollary~\ref{cor:local-equivalence-restatement} shows there is a local map from $\cC_2^\vee$ to $\cC_1$, as desired.
\end{proof}

\subsection{The swallow-follow diffeomorphism}\label{sec:4.2}
We now finally specialize to the case when our split diffeomorphism $\phi_1 \# \phi_2$ is $\smash{(\tl^i \tm^j) \# \id}$. We then use the action on knot Floer homology to calculate the action of $\tl$ (and $\tm$) on large surgeries along $K \# -K$. As we will see, it will be necessary to simultaneously keep track of the Heegaard Floer involution $\iota$. Although straightforward, we record the calculation below:

\begin{theorem}\label{thm:1.5}
Let $K_1$ and $K_2$ be any pair of knots and $n \geq g_3(K_1 \# K_2)$. Then we have a homotopy equivalence of tuples
\[
(\CFm(S^3_n(K_1 \# K_2), [0]), \tl, \tm, \iota) \simeq (A_0(K_1 \# K_2), s \otimes \id, \id, \iota_\otimes)
\]
where 
\[
s=\id+\Phi\Psi \quad \text{and} \quad \iota_{\otimes} = (\id \otimes \id + \Phi \otimes \Psi) \circ (\iota_{K_1} \otimes \iota_{K_2}).
\]
\end{theorem}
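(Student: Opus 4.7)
The plan is to assemble the claimed homotopy equivalence from three ingredients already set up earlier in the paper: the large surgery identification of Lemma~\ref{lem:3.3}, the split-diffeomorphism tensor decomposition of Lemma~\ref{lem:4.2}, and a direct identification of the actions of the torus-twists $\tl_1$ and $\tm_1$ on $\CFK(K_1)$.

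First I would verify that $\tl$ and $\tm$ are split diffeomorphisms of $(S^3, K_1 \# K_2)$. From the construction recalled in Section~\ref{sec:2.1}, the support of each torus-twist lies in a collar of the swallow-follow torus $T$ surrounding $K_1$, while the summand $K_2$ sits inside a small ball disjoint from $T \times [-1, 1]$ and fixed pointwise by both twists. Thus $\tl = \tl_1 \# \id$ and $\tm = \tm_1 \# \id$, where $\tl_1$ and $\tm_1$ denote the longitudinal and meridional twists of the single-knot pair $(S^3, K_1)$.

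Next I would apply Lemma~\ref{lem:3.3} to both $\tl$ and $\tm$ (and implicitly to $\iota$). Although the lemma is stated for a single diffeomorphism, Lemma~\ref{lem:3.2} provides a single chain map $F_{W,\cF,\mathfrak{x}}$ that commutes up to homotopy with $\iota_K$ and with any relative self-diffeomorphism of $(S^3, K)$, so the same map simultaneously intertwines the whole tuple. This yields
\[
(\CFm(S^3_n(K_1 \# K_2), [0]), \tl, \tm, \iota) \simeq (A_0(K_1 \# K_2), \tl, \tm, \iota_{K_1 \# K_2}).
\]
Applying Lemma~\ref{lem:4.2} to each of $\tl_1 \# \id$ and $\tm_1 \# \id$---again noting that a single fission cobordism is equivariant for both twists at once---then upgrades this to
\[
(A_0(K_1 \# K_2), \tl, \tm, \iota_{K_1 \# K_2}) \simeq (A_0(\CFK(K_1) \otimes \CFK(K_2)), \tl_1 \otimes \id, \tm_1 \otimes \id, \iota_\otimes).
\]

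The main obstacle is the final identification of $\tl_1$ and $\tm_1$ on $\CFK(K_1)$ with the Sarkar map $s = \id + \Phi\Psi$ and with the identity, respectively. The strategy is to isotope each torus-twist off the exterior of $K_1$ and into the solid torus $N(K_1)$: for $\tm_1$, the meridian of $T$ bounds a compressing disk in $N(K_1)$, so the Dehn twist unwinds via an isotopy that fixes the basepoints on $K_1$ and therefore acts trivially on $\CFK(K_1)$; for $\tl_1$, the pushed-in twist acts by rotating $K_1$ once around its own longitude, and under the basepoint-moving dictionary of Sarkar \cite{sarkar2015moving} and Zemke \cite{Zemkequasistab} this is computed to be exactly $s = \id + \Phi\Psi$. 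Combining these identifications, and absorbing $\tm_1 \otimes \id \simeq \id$ and $\tl_1 \otimes \id \simeq s \otimes \id$, yields the claimed homotopy equivalence of tuples.
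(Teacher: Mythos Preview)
Your proposal is correct and follows essentially the same route as the paper: both assemble the equivalence from Lemma~\ref{lem:3.3} (large surgery), Lemma~\ref{lem:4.2} (tensor splitting for split diffeomorphisms), and the identification of $\tl_1$ with the Sarkar map and $\tm_1$ with the identity via \cite{sarkar2015moving, Zemkequasistab}. The only cosmetic differences are that the paper applies Lemma~\ref{lem:4.2} before Lemma~\ref{lem:3.3} rather than after, and states the $\tl_1 \simeq s$ and $\tm_1 \simeq \id$ facts by citation rather than sketching the isotopy argument; your added remark that a single cobordism map from Lemma~\ref{lem:3.2} simultaneously intertwines all the symmetries is a helpful clarification that the paper leaves implicit.
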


\begin{proof}[Proof of Theorem~\ref{thm:1.5}]
This follows immediately from Lemma~\ref{lem:4.2}, which gives a homotopy equivalence
\[
(\CFK(K_1 \# K_2), \tl \# \id, \tm \# \id, \iota_{K_1 \# K_2}) \simeq (\CFK(K_1) \otimes \CFK(K_2), \tl \otimes \id, \tm \otimes \id, \iota_\otimes).
\]
As is well-known, the longitudinal twist $\tl$ on $K_1$ acts as the Sarkar map $s$ on $\CFK(K_1)$ \cite{sarkar2015moving, Zemkequasistab}. The meridional twist $\tm$ on $K_1$ acts as the identity, since it is isotopic to the identity through an isotopy which fixes $K_1$ pointwise. The claim then follows from Lemma~\ref{lem:3.3}.
\end{proof}

We now complete the proof of the main theorem:

\begin{proof}[Proof of Theorem~\ref{thm:1.2}]
Suppose that we had an extension of $(\smash{\tl^i \tm^j}) \# \id$ over $\smash{S^3_{1/m}(K \# -K)}$ for some $m$ positive and odd. As in the proof of Theorem~\ref{thm:1.5}, we know that $\tl$ acts on $\CFK(K_1)$ by the Sarkar map $s$, while $\tm$ acts on $\CFK(K_1)$ by the identity. Using Theorem~\ref{thm:4.1} together with the fact that $s^2 \simeq \id$, we obtain a local map
\[
f \colon (\CFK(K), \id, \iota_K) \rightarrow (\CFK(K), \tl^i\tm^j, \iota_K) \simeq (\CFK(K), s, \iota_K).
\]
This may be thought of as a self-local map of the $\iota_K$-complex $\cC = (\CFK(K), \iota_K)$ satisfying the additional condition
\[
s \circ f \simeq f.
\]
Now let $\cC_{\mathrm{conn}}$ be the connected complex of $\cC$, so that there are $(\iota_K$-)local maps $h_1 \colon \cC_{\mathrm{conn}} \rightarrow \cC$ and $h_2 \colon \cC \rightarrow \cC_{\mathrm{conn}}$. Then $f$ induces a self-local map of $\cC_{\mathrm{conn}}$ given by $\tilde{f} = h_2 \circ f \circ h_1$. Since $s$ commutes with any chain map up to homotopy (using \cite[Lemma 2.8]{Zemkeconnected} and the fact that  $s= \mathrm{id} + \Phi \Psi$), we again have
\[
s \circ \tilde{f} \simeq \tilde{f}.
\]
On the other hand, by Lemma~\ref{lem:2.10} we know that $\tilde{f}$ is a chain isomorphism. It follows that $s \simeq \id$ on $\cC_{\mathrm{conn}}$, as desired. The case when $m$ is negative and odd follows from a similar argument after mirroring and reversing orientation. (It is easily checked that $\cC$ is $\Ds$-nontrivial if and only if $\cC^\vee$ is $\Ds$-nontrivial.)
\end{proof}

%%%%%%%%%%%%%%%%%%%%%%%%%%%%%%%%%%%%%%%%%%%%%%%%%%%%%%%%%%%%%%%%%%%%%%%%%%%%%%%%%%%%%%%%%%%%%%%%%%%%%%%%%%%%%%%%%%%%%%%%%%%%%%%%%%%%%%%%%%%%%%%%%%%%%%%%%%%%%%%%%%%%%%%%%%%%%%%%%%%%%%%%%%%%%%%%%%%%%%%%%%%%%%%%%%%%%%%%%%%%%%%%%%%%%%%%%%%%%%%%%%%%%%%%%%%%%%%%%%%%%%%%%%%%%%%%%%%%%%%%%%%%%%%%%%%%%%%%%%%%%%%%%%%%%%%%%%%%%%%%%%%%%%%%%%%%%%%%%%%%%%%%%%%%%%%%%%%%%%%%%%%%%%%%%%%%%%%%%%%%%%%%%%%%%%%%%%%%%%%%%%%%%%%%%%%%%%%%%%%%%%%%%%%%%%%%%%%%%%%%%%%%%%%%%%%%%%%%%%%%%%%%%%%%%%%%%%%%%%%%%%%%%%

\section{Examples and further discussion}\label{sec:5}
In order to demonstrate the broad applicability of our obstruction, we now give several examples of Theorems~\ref{thm:1.2} and \ref{thm:numerical-intro}. 

\subsection{Further examples of Gompf's construction}\label{sec:5.1}
We begin with the proof of Corollary~\ref{cor:1.3}:

\begin{proof}[Proof of Corollary~\ref{cor:1.3}]
We claim that if a knot $K$ satisfies the condition from the hypothesis, that is:
\begin{equation}\label{hypothesis}
    2 \Arf(K) + |\tau(K)| \equiv 1 \text{ or } 2 \bmod 4,
\end{equation}    
then the connected complex $C_{\mathrm{conn}}(K)$ consists of a step-length-one staircase (or possibly a single dot) together with a side-length-one box, as schematically shown in Figure~\ref{fig:figure_eight_sarkar}. Note that it follows from \cite[Proposition 8.2]{HM} that if the number of boxes in the main diagonal is odd, then the connected complex $C_{\mathrm{conn}}(K)$ has the form mentioned above.
\begin{figure}[h!]
\center
\includegraphics[scale=0.8]{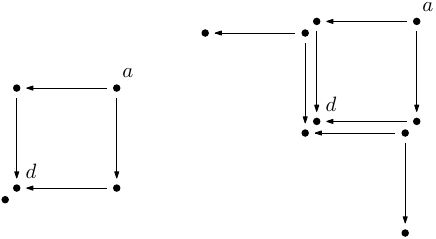}
\caption{Left: the connected complex for $K = 4_1$. Right: the connected complex for $T_{2,3} \# T_{2,3}$. In both cases, $s(a) = a + d$. We have omitted writing $\cU$ and $\cV$ multiples of the generators for brevity.}
\label{fig:figure_eight_sarkar}
\end{figure}

Moreover, it follows from the structure of the knot Floer complex of a thin knot \cite{petkovacables} that the parity of the following expression determines the parity of the number of boxes in the main diagonal
\begin{equation}\label{determinant}
    \frac{D - 2|\tau| -1}{4}.
\end{equation}
See for example \cite[Section 8.1]{HM}, here $D$ is the determinant of a knot. Hence to prove our claim, it suffices to show that (\ref{hypothesis}) implies that (\ref{determinant}) is odd. Note that $\Arf(K) = 0$ if and only if $D \equiv \pm 1 \bmod 8$, which translates to the following relation:
\begin{equation}\label{arf}
    D + 4 \Arf(K) \equiv \pm 1 \bmod 8.
\end{equation}
Now note that (\ref{determinant}) is odd if and only if
\begin{equation}\label{to_show}
D - 2|\tau| \equiv 5 \bmod 8.
\end{equation}
Replacing $D$ from (\ref{arf}) in the left side of (\ref{to_show}) we get
\begin{equation}
 - 4 \Arf(K) \pm 1 - 2 |\tau|.   
\end{equation}
Using (\ref{hypothesis}), we get that the above expression can only take values in $\{5,3,7 \} \bmod 8$ regardless of the value of $\Arf(K)$. However, if it is either $\{3,7 \} \bmod 8$ then (\ref{determinant}) is not an integer, so  $D - 2|\tau| \equiv 5 \bmod 8$ , as required.

Now it follows that in the connected complex $C_{\mathrm{conn}}(K)$, the Sarkar map $s$ sends $s(a) = a + d$ and is the identity otherwise. It is then straightforward to check that $K$ is $\Ds$-nontrivial since we can rule out the possibility of $s \simeq \mathrm{id}$ on this complex. Hence the result follows from Theorem~\ref{thm:1.2}.
\end{proof}

We now move on to examples where $K$ is a connected sum of torus knots. These are especially interesting due to the results of \cite{Gompfhandle} and \cite{RayRuberman}, where it was shown that for any individual torus knot $K$, the longitudinal and meridional twists extend over $C_{K, m}$ for any $m \in \Z^{\neq 0}$. Hence for $K$ a torus knot, no twist along the swallow-follow torus makes $C_{K, m}$ into a cork. In contrast, we show that for $K$ a connected sum of torus knots, Gompf's construction often yields a strong cork. This includes the simplest case of $K = T_{2, 3} \# T_{2, 3}$:

\begin{corollary}\label{cor:trefoil_connected}
Let $K = s T_{2, 2n + 1}$ for $n$ odd and $s \equiv 2 \text{ or } 3 \bmod 4$. Then  $(Y_{K, m}, \tl^i \tm^j)$ is a strong cork for all $(m, i, j) \in \Z^3$ with $m$ and $i$ both odd.
\end{corollary}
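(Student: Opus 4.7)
The plan is to verify that the two hypotheses of Corollary~\ref{cor:1.3} are satisfied by $K = sT_{2,2n+1}$ under the assumptions that $n$ is odd and $s \equiv 2$ or $3 \pmod 4$; the conclusion then follows immediately. Explicitly, I must check that $K$ is Floer-thin and that $2\Arf(K) + |\tau(K)| \equiv 1$ or $2 \pmod 4$.

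First I would verify Floer-thinness. Each $T_{2,2n+1}$ is an $L$-space torus knot whose hat knot Floer homology $\widehat{HFK}(T_{2,2n+1})$ is a staircase with $2n+1$ generators, all lying on the single $\delta$-diagonal $\delta = -n$. Since $\widehat{HFK}(K_1 \# K_2) \cong \widehat{HFK}(K_1) \otimes \widehat{HFK}(K_2)$ and the $\delta$-grading is additive under tensor product, concentration on a single $\delta$-diagonal is preserved under iterated connected sums. Hence $K = sT_{2,2n+1}$ is Floer-thin, with $\widehat{HFK}(K)$ supported on the single diagonal $\delta = -sn$.

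Next I would verify the arithmetic condition. By additivity of $\tau$ under connected sum, $\tau(K) = sn$, and by additivity of $\Arf$ modulo $2$, $\Arf(K) \equiv s \cdot \Arf(T_{2,2n+1}) \pmod 2$. The classical formula $\Arf(T_{2,q}) \equiv (q^2 - 1)/8 \pmod 2$ specializes to $\Arf(T_{2,2n+1}) \equiv n(n+1)/2 \pmod 2$; for $n$ odd this is $1$ when $n \equiv 1 \pmod 4$ and $0$ when $n \equiv 3 \pmod 4$. A short case analysis on $(s \bmod 4,\, n \bmod 4)$ then shows that $E := 2\Arf(K) + |\tau(K)|$ reduces to $2 \pmod 4$ when $s \equiv 2 \pmod 4$ and to $1 \pmod 4$ when $s \equiv 3 \pmod 4$, for both residue classes of $n$ modulo $4$, while $E \equiv 0$ or $3 \pmod 4$ in the excluded cases $s \equiv 0$ or $1 \pmod 4$. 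With both hypotheses confirmed, Corollary~\ref{cor:1.3} applies and gives the strong cork conclusion. There is no substantial obstacle in the proof; the main work is the routine arithmetic bookkeeping of the case analysis together with the input of the classical Arf-invariant formula for $(2, q)$-torus knots.
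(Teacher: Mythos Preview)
Your proposal is correct and follows essentially the same approach as the paper's own proof: verify Floer-thinness of $sT_{2,2n+1}$, then check the arithmetic hypothesis of Corollary~\ref{cor:1.3} using additivity of $\tau$ and $\Arf$ together with the known values for $T_{2,2n+1}$. Your justification of thinness via the K\"unneth formula is in fact slightly more careful than the paper's one-line appeal to alternating knots, and your case analysis agrees with the paper's.
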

\begin{proof}
Note that $K$ is Floer-thin since $T_{2, 2n+1}$ is alternating. It is a standard fact that
\[
\Arf(T_{2, 2n+1}) = 
\begin{cases}
1 &\text{ if } n \equiv 1 \text{ or } 2 \bmod 4 \\
0 &\text{ if } n \equiv 0 \text{ or } 3 \bmod 4
\end{cases}
\quad \text{and} \quad \tau(T_{2, 2n+1}) = n.
\]
The additivity of $\Arf$ and $\tau$ then gives $\Arf(K)$ and $\tau(K)$. Exhaustive casework then shows that the hypotheses of Corollary~\ref{cor:1.3} hold precisely when $n$ is odd and $s \equiv 2 \text{ or } 3 \bmod 4$. 
\end{proof}

Corollaries~\ref{cor:1.3} and \ref{cor:trefoil_connected} deal with thin knots, which are some of the simplest knots from the point of view of knot Floer homology. Note that all of the knots discussed in \cite{gompf2017infinite} are thin. However, our obstruction is certainly capable of producing strong corks from non-thin knots:

\begin{corollary}\label{cor:nonthin}
Let $K = -2T_{2n, 2n+1} \# T_{2n, 4n+1}$ for $n$ odd. Then $(Y_{K, m}, \tl^i \tm^j)$ is a strong cork for all $(m, i, j) \in \Z^3$ with $m$ and $i$ both odd.
\end{corollary}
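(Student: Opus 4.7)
The plan is to apply Theorem~\ref{thm:1.2} by verifying that $K = -2T_{2n,2n+1} \# T_{2n,4n+1}$ is $\mathcal{S}$-nontrivial whenever $n$ is odd. The essential numerical observation is that
\[
\tau(T_{2n,4n+1}) = 2n(2n-1) = 2\tau(T_{2n,2n+1}),
\]
so that $\tau(K) = 0$ and the staircase summits of the three torus-knot summands are positioned to cancel in a controlled way once we take the tensor product.

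First I would record the knot Floer staircase complexes of $T_{2n,2n+1}$ and $T_{2n,4n+1}$ together with the $\iota_K$-actions on them, which are available from the work of Hendricks--Hom--Stoffregen and Dai--Hom--Stoffregen--Truong. Via Lemma~\ref{lem:4.2} applied iteratively to the triple connected sum, this identifies $(\CFK(K), \iota_K)$ up to homotopy equivalence with $\CFK(-T_{2n,2n+1})^{\otimes 2} \otimes \CFK(T_{2n,4n+1})$ equipped with the appropriate tensor $\iota_\otimes$-map. I would then carry out the staircase cancellation in this tensor product to extract a representative of the $\iota_K$-connected complex $C_{\mathrm{conn}}(K)$. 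Guided by the pattern of the proof of Corollary~\ref{cor:1.3}, the target outcome is that $C_{\mathrm{conn}}(K)$ is locally equivalent to a direct sum of a staircase part with an odd number of side-length-one boxes on which $\Phi\Psi$ identifies the two box generators, precisely analogous to the right-hand side of Figure~\ref{fig:figure_eight_sarkar}. The parity hypothesis that $n$ be odd enters at exactly this cancellation step, controlling the parity of boxes that survive.

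Once such a description of $C_{\mathrm{conn}}(K)$ is in hand, the verification that $s = \mathrm{id} + \Phi\Psi$ is not chain-homotopic to the identity goes through as in the proof of Corollary~\ref{cor:1.3}: by Lemma~\ref{lem:2.10}, any self-local map of the connected complex is a chain isomorphism and so cannot cancel the box contribution against the staircase, ruling out $s \simeq \mathrm{id}$. Theorem~\ref{thm:1.2} then immediately yields the strong-cork conclusion for all $(m,i,j)\in\Z^3$ with $m$ and $i$ both odd.

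The main obstacle is the explicit staircase cancellation in the triple tensor product and the attendant bookkeeping of the $\iota_\otimes$-action on the surviving generators. Unlike in Corollary~\ref{cor:1.3}, we cannot appeal to Petkova's classification of thin $\iota_K$-complexes, and the $\iota_K$-action on torus knot staircases mixes generators across multiple steps. I expect this to be the technical heart of the argument; once accomplished, the $\mathcal{S}$-nontriviality of $K$ and thus the conclusion of the corollary follow cleanly from the machinery already developed.
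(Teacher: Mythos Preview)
Your overall plan is correct and matches the paper's: verify that $K$ is $\Ds$-nontrivial by identifying the $\iota_K$-connected complex, check that $s\not\simeq\id$ on it, and invoke Theorem~\ref{thm:1.2}. The difference is that the paper does not carry out the staircase cancellation from scratch; it simply cites \cite{hendricks2022quotient}, where the connected complex of $-2T_{2n,2n+1}\#T_{2n,4n+1}$ for $n$ odd is already computed. What you describe as the ``main obstacle'' and ``technical heart'' is therefore outsourced entirely.

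A few of your expectations about the outcome of that computation are off, and would mislead you if you attempted it yourself. Since $\tau(K)=0$ (as you observe), the connected complex has no nontrivial staircase: it looks like the \emph{left}-hand side of Figure~\ref{fig:figure_eight_sarkar} (the $4_1$ picture: a single generator together with one box), not the right-hand side. Moreover the box is not side-length-one; the arrow lengths are larger and odd, which is exactly where the hypothesis ``$n$ odd'' enters. With that corrected picture, the check that $s=\id+\Phi\Psi$ is not homotopic to the identity is straightforward, just as in the proof of Corollary~\ref{cor:1.3}.
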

\begin{proof}
For $n$ odd, the connected complex of $-2T_{2n, 2n+1} \# T_{2n, 4n+1}$ was computed in \cite{hendricks2022quotient}. The result is the same as shown on the left in Figure~\ref{fig:figure_eight_sarkar}, except that the lengths of the arrows appearing in the differential are larger and odd. It is easily checked that  $s \nsimeq \mathrm{id}$ on $C_{\mathrm{conn}}(K)$.
\end{proof}

Many similar examples are possible using linear combinations of L-space knots; we present Corollary~\ref{cor:nonthin} due to the fact that the requisite computation already appears in the literature \cite{hendricks2022quotient}.

\begin{remark}
The preceding examples have primarily focused on the swallow-follow operation on $Y_{K, m}$ due to its connection to \cite{gompf2017infinite}. As discussed in Section~\ref{sec:2.2}, however, we can instead let $K = K_1$ and $K_2$ be any concordance inverse to $K_1$. It is not difficult to check that each of the instances of $Y_{K, m}$ in this paper can generalized to $\smash{S^3_{1/m}(K_1 \# K_2)}$.
\end{remark}

\subsection{More general diffeomorphisms}\label{sec:5.2}
We now move on to more general examples of corks where the underlying diffeomorphism is not a twist along the swallow-follow torus. A wide range of such examples come from periodic involutions on knots. Recall that a knot $K$ in $S^3$ is 2-\textit{periodic} if there exists an orientation-preserving involution $\tau$ of $S^3$ that preserves the oriented knot $K$ setwise. The action of such an involution on knot Floer homology was considered in \cite{dai2023equivariant, mallick2022knot}. 

By postcomposing $\tau$ with a half-Dehn twist along $K$, we obtain a relative diffeomorphism of $(S^3, K)$ which by abuse of notation we also denote by $\tau$.\footnote{In fact, since knot Floer homology is a doubly-basepointed theory, this composition is necessary in order to define the action of $\tau$ on $\CFK(K)$. Technically, we must also make sure to perform the half-Dehn twist along the orientation of $K$. See \cite[Section 2.2]{mallick2022knot} for details.} Note that as a relative diffeomorphism, $\tau^2$ is isotopic to the Sarkar basepoint-pushing map on $K$, see \cite[Proposition 2.6]{mallick2022knot}. In the case that $K$ is a $2$-periodic knot, we may thus think of $\tau$ as forming a square root of $s$. We have the following simple example:

%Since in knot Floer homology, one considers  doubly-based knots, in order to properly define the action of $\tau$ on $CFK(K)$, it is necessary to post-compose the action of $\tau$ by the half-Dehn twist  $\rho$ on $K$, see \cite[Section 2.2]{mallick2022knot} for details. Let us continue to represent the composition of $\tau$ with the half-Dehn twist as $\tau$. It follows that $\tau$ is a relative diffeomorphism of $(S^3, K)$. We have the following
\begin{corollary}\label{cor:periodic}
Let $(K,\tau)$ be a 2-periodic knot. If $K$ is $\Ds$-nontrivial, then $(Y_{K,m}, (\tau \# \mathrm{id})^{i})$ is a strong cork for any $m$ odd and $i \not\equiv 0 \bmod 4$.
\end{corollary}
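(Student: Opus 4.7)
The plan is to reduce the statement to Theorem~\ref{thm:1.2} via a squaring argument, bypassing any direct Floer-theoretic analysis of $\tau$ itself. The crucial input is that, because $(K,\tau)$ is 2-periodic, the relative diffeomorphism $\tau^2$ of $(S^3,K)$ is isotopic to the Sarkar basepoint-pushing map, which in turn is the longitudinal twist $\tl$ along a neighborhood torus of $K$ (see \cite[Proposition 2.6]{mallick2022knot} and Section~\ref{sec:2.1}). It follows that $(\tau\#\id)^2$ is isotopic to $\tl\#\id$ as a relative self-diffeomorphism of $(S^3, K\# -K)$, and therefore $(\tau\#\id)^{2j}$ is isotopic to $\tl^j$ as a self-diffeomorphism of $Y_{K,m}$ for every integer $j$.

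Suppose for contradiction that $(\tau\#\id)^i$ extends to a self-diffeomorphism $\Phi$ of some homology ball $W$ bounded by $Y_{K,m}$. Since $i\not\equiv 0 \bmod 4$, there are two cases. If $i$ is odd, then $\Phi\circ\Phi$ extends $(\tau\#\id)^{2i}$ over $W$; because $(\tau\#\id)^{2i}\simeq \tl^i$ as diffeomorphisms of $Y_{K,m}$, the standard collar-isotopy trick produces a self-diffeomorphism of $W$ restricting to $\tl^i$ on the boundary. This contradicts Theorem~\ref{thm:1.2}, which asserts that $(Y_{K,m},\tl^i)$ is a strong cork under the hypotheses that $K$ is $\Ds$-nontrivial and that $m$ and $i$ are both odd. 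If instead $i\equiv 2\bmod 4$, write $i=2j$ with $j$ odd; then $(\tau\#\id)^i\simeq \tl^j$, so $\Phi$ itself yields (again via collar-isotopy) an extension of $\tl^j$ over $W$, contradicting Theorem~\ref{thm:1.2} applied with odd parameters $m$ and $j$.

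The principal technical step, and the only place the 2-periodicity of $K$ is genuinely used, is the identification $\tau^2\simeq \tl$ as isotopy classes of relative diffeomorphisms of $(S^3,K)$. This is strictly stronger than the analogous chain-homotopy relation on $\CFK(K)$, but is a standard consequence of the fact that squaring $\tau$ (a half-Dehn-twisted periodic involution) produces the full longitudinal twist around $K$. Once this is in hand, the remainder of the proof consists only of the topological squaring above and a direct invocation of Theorem~\ref{thm:1.2}; no further Floer-theoretic computation is required.
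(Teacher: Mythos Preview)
Your proof is correct and follows essentially the same approach as the paper: both reduce to Theorem~\ref{thm:1.2} via the identification of $\tau^2$ with the Sarkar basepoint-pushing map and a squaring argument. The only minor difference is that you invoke the topological isotopy $\tau^2\simeq \tl$ to apply Theorem~\ref{thm:1.2} as a black box, whereas the paper uses only the chain-level relation $\tau^2\simeq s$ on $\CFK(K)$ and implicitly reruns the proof of Theorem~\ref{thm:1.2} through the general framework of Theorems~\ref{thm:3.1} and~\ref{thm:4.1}; your case split ($i$ odd versus $i\equiv 2\bmod 4$) is also slightly cleaner than the paper's treatment of $i=1,2,3$ followed by the periodicity remark $\tau^4\simeq \id$.
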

\begin{proof}
As discussed previously, we have $\tau^2 \simeq s$. The $\Ds$-nontriviality of $K$ thus implies that $(\tau \# \id)^2$ makes $Y_{K,m}$ into a strong cork. It follows that $\tau \# \id$ makes $Y_{K,m}$ into a strong cork, since if $\tau \# \id$ extended over some homology ball, so too would $(\tau \# \id)^2$. A similar observation for $(\tau \# \id)^6 \simeq s^3 \# \id$ likewise shows the same for $(\tau \# \id)^3$. Noting that $\tau^4 \simeq s^2 \simeq \mathrm{id}$ as self-maps of $\CFK(K)$ easily gives the claim.
\end{proof}
There are many examples of 2-periodic knots that are also $\Ds$-nontrivial. For instance, Figure~\ref{fig:figure_eight} shows that the two simplest $\Ds$-nontrivial knots $K = 4_1$ and $5_2$ admit such a periodic involution.

\begin{figure}[h!]
\center
\includegraphics[scale=0.18]{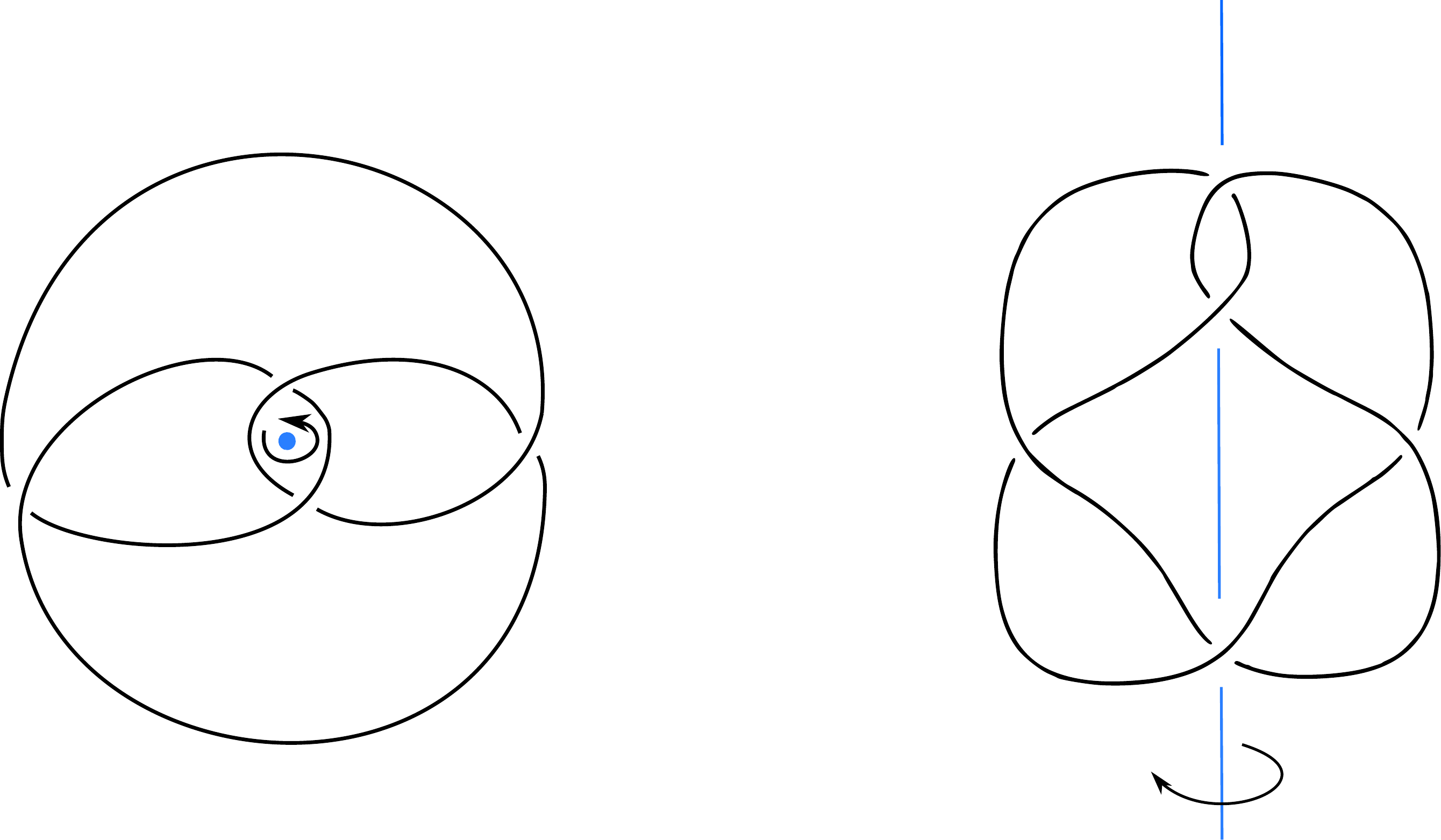}
\caption{The knots $4_1$ and $5_2$ with the periodic involutions $\tau$.}
\label{fig:figure_eight}
\end{figure}

We now give an example which is not based on $\Ds$-nontriviality and instead utilizes Theorem~\ref{thm:numerical-intro}. Let $K_1$ and $K_2$ be slice and consider a split diffeomorphism on $K_1 \# K_2$ of the form $\phi \# \id$. Since $K_2$ is slice, $(\CFK(K_2), \id, \iota_{K_2})$ is locally trivial. It easily follows that 
\[
\delta(K_1 \# K_2, \phi \# \id) = \delta(K_1, \phi).
\]
We use this to give an example of a cork with a slightly more subtle boundary diffeomorphism:

\begin{corollary}\label{cor:connect_periodic}
Let $K_1 = 4_1 \# 4_1$ and equip $K_1$ with the split diffeomorphism $\phi = \tau \# \tau$. Let $K_2$ be any slice knot. Then
\[
(S^3_{1/m}(K_1 \# K_2), \phi \# \id)
\]
is a strong cork for any $m$ positive and odd.
\end{corollary}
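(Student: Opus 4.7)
The strategy is to apply Theorem~\ref{thm:numerical-intro}, so the aim is to establish that $\delta(K_1 \# K_2, \phi \# \id) > 0$. The first step is the reduction noted in the paragraph immediately preceding the corollary: since $K_2$ is slice, the $(\phi, \iota_K)$-complex $(\CFK(K_2), \id, \iota_{K_2})$ is locally equivalent to the trivial complex $(\F[\cU,\cV], \id, \iota_0)$. Combining Lemma~\ref{lem:4.2} (which gives a homotopy equivalence $(\CFK(K_1 \# K_2), \phi \# \id, \iota_{K_1\#K_2}) \simeq (\CFK(K_1), \phi, \iota_{K_1}) \otimes (\CFK(K_2), \id, \iota_{K_2})$) with the fact that tensoring with a locally trivial factor preserves the local equivalence class, and hence $\delta$, I would conclude
\[
\delta(K_1 \# K_2, \phi \# \id) \;=\; \delta(K_1, \phi) \;=\; \delta(4_1 \# 4_1, \tau \# \tau).
\]

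Next, I would invoke Lemma~\ref{lem:4.2} once more to identify $(\CFK(4_1 \# 4_1), \tau \# \tau, \iota_{K_1})$ with $(\CFK(4_1) \otimes \CFK(4_1), \tau \otimes \tau, \iota_\otimes)$. By Lemmas~\ref{lem:d=0->implies-local-map} and~\ref{lem:fromA_0toC}, the inequality $\delta > 0$ for this triple is equivalent to the nonexistence of a local map of $(\phi, \iota_K)$-complexes from $(\F[\cU,\cV], \id, \iota_0)$ into it. Applying Corollary~\ref{cor:local-equivalence-restatement} with $\cC_1 = \cC_2 := (\CFK(4_1), \tau, \iota_K)$, this in turn is equivalent to showing that there is no local map
\[
\cC^\vee \;\longrightarrow\; \cC, \qquad \cC := (\CFK(4_1), \tau, \iota_K).
\]

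To finish the argument, I would verify this last nonexistence statement by working with an explicit model of $\CFK(4_1)$ (equivalently, its $\iota_K$-connected complex, which as in Figure~\ref{fig:figure_eight_sarkar} consists of a side-length-one box together with a central dot) equipped with the periodic involution $\tau$ acting as a square root of the Sarkar map, so that $\tau^2 \simeq s = \id + \Phi\Psi$. The key input is the $\Ds$-nontriviality of $4_1$, established in the course of Corollary~\ref{cor:1.3}, which forces $s \not\simeq \id$ on $C_{\mathrm{conn}}(4_1)$ and thereby constrains the homotopy class of $\tau$ on the connected complex.

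\textbf{Main obstacle.} The principal difficulty is this final algebraic verification: one must rule out every grading-preserving, $\F[\cU,\cV]$-equivariant chain map $\cC^\vee \to \cC$ inducing the localization isomorphism and homotopy-commuting with both $\tau$ and $\iota_K$. This requires careful bookkeeping of how $\tau$ and $\iota_K$ transform under dualization on $\CFK(4_1)$, together with the bigrading constraints on a candidate local map. The expectation is that $\Ds$-nontriviality of $4_1$ propagates through the relation $\tau^2 \simeq s$ to obstruct all such maps; equivalently, composing a hypothetical local map with its dual should yield a self-local map of the connected complex forcing $s \simeq \id$, contradicting $\Ds$-nontriviality.
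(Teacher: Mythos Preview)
Your reduction to $\delta(4_1 \# 4_1, \tau \# \tau) > 0$ matches the paper exactly, and your further reformulation via Corollary~\ref{cor:local-equivalence-restatement} as the nonexistence of a local map $\cC^\vee \to \cC$ with $\cC = (\CFK(4_1), \tau, \iota_K)$ is also correct (though you want $\cC_1 = \cC$ and $\cC_2 = \cC^\vee$ rather than $\cC_1 = \cC_2$).

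The gap is in the final step. Your hope that $\Ds$-nontriviality of $4_1$ will carry the day via $\tau^2 \simeq s$ does not pan out, and the paper in fact explicitly flags this. First, your suggested mechanism fails on its face: if $f \colon \cC^\vee \to \cC$ is a local map, then $f^\vee \colon \cC^\vee \to (\cC^\vee)^\vee \cong \cC$ again has the same source and target, so there is no composition producing a self-local map. Even exploiting the amphichirality of $4_1$ to identify $\cC^\vee$ with $\cC$ as $\iota_K$-complexes does not help, since you have no control over how such an identification transports $\tau$; you never arrive at a self-local map intertwining $\id$ with $s$, which is what the Lemma~\ref{lem:2.10} argument requires. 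More decisively, the paper remarks immediately after the proof that $(\tau \# \tau)^2 \simeq s \otimes s$ acts as the \emph{identity} on $H_*(A_0(4_1 \# 4_1))$. Hence any argument that ultimately rests on the Sarkar map being nontrivial---which is exactly what $\Ds$-nontriviality encodes---is blind to this example; the point of Corollary~\ref{cor:connect_periodic} is precisely that Theorem~\ref{thm:numerical-intro} detects corks beyond the reach of $\Ds$-nontriviality.

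What the paper actually does for the final step is a direct computation on $A_0(4_1 \# 4_1)$: using the known action of $\tau$ on $\CFK(4_1)$ and Lemma~\ref{lem:4.2}, it writes down the five homology generators in grading zero together with their images under $\iota$ and $\tau \otimes \tau$, determines the $\iota$-invariant subspace, and checks that the unique $U$-nontorsion class there is not $\tau \otimes \tau$-invariant. By the description of cycles in $\Cyl$ from Lemma~\ref{lem:d=0->implies-local-map}, this forces $\delta > 0$. You should replace your proposed final paragraph with this concrete linear-algebra verification.
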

\begin{proof}
By Theorem~\ref{thm:numerical-intro}, it suffices to show that 
\[
\delta(4_1 \# 4_1, \tau \# \tau) = \delta(K_1, \phi) = \delta(K_1 \# K_2, \phi \# \id) > 0. 
\]
We check this by showing that there is no $U$-nontorsion homology class in $H_{*}(A_0(K_1))$ which lies in grading zero and is fixed by both the action of $\tau \# \tau$ and $\iota$. The desired result then follows from the definition of the $\delta$-invariant. 

Label the generators of $\CFK(4_1)$ as in Figure~\ref{fig:figure_eight_actions}. The cycles in $A_0(4_1 \# 4_1)$ are as in Table~\ref{table:computation}, where all but $x|x$ are $U$-torsion. The action of $\tau$ on $4_1$ follows from \cite[Theorem 1.7]{mallick2022knot}; Lemma~\ref{lem:4.2} then gives the computation in Table~\ref{table:computation}. A similar calculation appears in \cite[Section 3.2]{dai20222}.
 
\begin{figure}[h!]
\center
\includegraphics[scale=0.8]{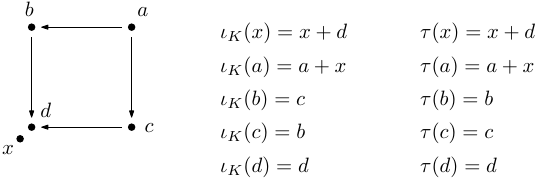}
\caption{The $\CFK(4_1)$ with the action of $\tau$ and $\iota_K$.}
\label{fig:figure_eight_actions}
\end{figure}

\begin{table}[h!]
%\centering
\begin{tabular}{|c|c|c|c|}
 \hline
 Generators of homology & Image under $\iota$ & Image under $\tau | \tau$ \\
 \hline
 $x|x$ & $x|x+x|d+d|x+d|d$ & $x|x+x|d+d|x+d|d$ \\
 $x|d$ & $x|d+d|d$ & $x|d+d|d$ \\
 $d|x$  & $d|x+d|d$ & $d|x+d|d$ \\
 $a|d+d|a+b|c+c|b$ & $a|d+d|a+b|c+c|b+x|d+d|x+d|d$ & $a|d+d|a+b|c+c|b+x|d+d|x$ \\
 $d|d$ & $d|d$ & $d|d$ \\
 \hline
\end{tabular}
\captionsetup{justification=centering}
\caption{Actions of $\iota$ and $\tau | \tau$ on $H_*(A_0(K))$.}\label{table:computation}
\end{table}

As in \cite[Lemma 3.3]{dai20222}, the $\iota$-invariant subspace of $H_*(A_0(4_1 \# 4_1))$ is spanned by
\[
[x|x] + [x|d] + [a|d + d|a + b|b + c|c], \quad [x|d] + [d|x] \quad \text{and} \quad [d|d].
\]
The latter two generators are $\tau | \tau$-invariant, while the first is not. Since only the first generator is $U$-nontorsion, the claim easily follows.
\end{proof}

Unlike Corollary~\ref{cor:periodic}, the above example cannot be obtained by passing to the squared diffeomorphism. Indeed, it can be checked that $s \otimes s$ acts as identity on $H_{*}(A_0(K))$; hence our use of Theorem~\ref{thm:numerical-intro} is essential. %We end with a remark that considers yet another generalization of Gompf's construction:

%%\begin{example}\label{ex:4.7}
%Example of a more general split diffeomorphism.\blue{which one of the above to cite?}
%\end{example}
%%%%%%%%%%%%%%%%%%%%%%%%%%%%%%%%%%%%%%%%%%%%%%%%%%%%%%%%%%%%%%%%%%%%%%%%%%%%%%%%%%%%%%%%%%%%%%%%%%%%%%%%%%%%%%%%%%%%%%%%%%%%%%%%%%%%%%%%%%%%%%%%%%%%%%%%%%%%%%%%%%%%%%%%%%%%%%%%%%%%%%%%%%%%%%%%%%%%%%%%%%%%%%%%%%%%%%%%%%%%%%%%%%%%

\end{document}